\newtheorem{thm}{Theorem}[section]
\newtheorem{lem}[thm]{Lemma}
\newcommand{\N}{\mathbb{N}}
\newcommand{\R}{\mathbb{R}}
\newcommand{\C}{\mathbb{C}}
\newcommand{\Z}{\mathbb{Z}}
\newcommand{\lb}{\langle}
\newcommand{\rb}{\rangle}
\renewcommand{\t}{\widetilde}
\renewcommand{\hat}{\widehat}
\title[Global well-posedness for the Schr\"odinger-Kawahara system]{Global well-posedness for the coupled system of Schr\"odinger and Kawahara equations}
\author{Wangseok Shin}
\address{Department of Mathematics, University of Illinois, Urbana, IL 61801, U.S.A.}
\email{wshin14@illinois.edu}
\subjclass[2000]{35Q53, 35Q55}
\date{}
\begin{document}
\maketitle

\begin{abstract} 
We study the local and global well-posedness for the coupled system of Schr\"odinger and Kawahara equations on the real line. The Sobolev space $L^{2} \times H^{-2}$ is the space where the lowest regularity local solutions are obtained. The energy space is $H^1 \times H^2$. We apply the Colliander-Holmer-Tzirakis method \cite{CHT} to prove the global well-posedness in $L^2 \times L^2$ where the energy is not finite. Our method generalizes the method of Colliander-Holmer-Tzirakis in the sense that the operator that decouples the system is nonlinear.
\end{abstract}

\section{Introduction}

In this paper, we consider the Cauchy problem for the coupled Schr\"odinger-Kawahara system on the real line $\R$,
\begin{align}\label{eq: SK}
\begin{cases}
i\partial_{t} u + \partial^{2}_{x}u = \alpha uv +\beta |u|^2u,  \\
\partial_{t} v + \gamma \partial^{3}_{x}v-\delta \partial^{5}_{x}v + v\partial_{x}v = \varepsilon\partial_{x}|u|^2, \\
u(x,0) = u_0(x), \quad v(x,0) = v_0(x),
\end{cases} & (x,t) \in \R^+ \times \R^+,
\end{align}
where $u=u(x,t) \in \C$ ,$v=v(x,t) \in \R$, and $\alpha, \beta, \gamma, \delta, \varepsilon$ are real constants. The system (\ref{eq: SK}) is a special case of the more general system 
\begin{equation}\label{eq: General}
\begin{cases}
i\partial_{t} S + ic_{1}\partial_{x}S + \partial^{2}_{x}S = c_{2} SL +c_{3} |u|^2u,  \\
\partial_{t} L + c_{4}\partial_{x} L+ P(\partial_{x})L + c_{5} L\partial_{x}L = c_{6} |S|^2,
\end{cases}
\end{equation}
where $c_{1}, c_{2}, c_{3}, c_{4}, c_{5}, c_{6}$ are real constants and $P$ is a constant coefficient polynomial. In a physical context, the system (\ref{eq: General}) describes various phenomena related to the interaction between short and long waves; here $S=S(x,t) \in \C$ and $L=L(x,t) \in \R$ models the short and long waves respectively.  See \cite{BOP1, Makhankov}, and the references therein for more details. 

When $\delta=0$, the system (\ref{eq: SK}) reduces to the more famous Schr\"odinger-Korteweg-de Vries (SKdV) system. Local and global well-posedness for the SKdV system has been extensively examined in several studies, for example \cite{BOP, BOP1, CL, GW, Pecher, Wu}. The endpoint local theory was obtained in \cite{GW}, where the authors proved the local well-posedness for $(u_0,v_0) \in L^{2} \times H^{-\frac34}$ using the Besov-type Fourier restriction spaces with some low frequency modifications. More recently, global well-posedness for the SKdV system with positive coupling interactions ($\alpha \varepsilon >0$, $\delta=0$ in (\ref{eq: SK})) was obtained for $(u_0,v_0) \in H^{\frac12+} \times H^{\frac12+}$ in \cite{Wu} utilizing the $I$-method. Finally we note that a system which couples the fourth-order Schr\"odinger equation and the fifth-order KdV equation was studied in \cite{ACF}. 

The aim of this paper is to prove local and global well-posedness results for the system (\ref{eq: SK}) in the case $\delta \neq 0$. By local well-posedness, we mean existence, uniqueness, and continuous dependence of the solutions with respect to the initial data locally in time. We say that the system (\ref{eq: SK}) is globally well-posed if it is locally well-posed and the local solutions can be extended to arbitrarily large time. Our main results are the following.

\begin{thm} \label{thm: LWP}
The system (\ref{eq: SK}) is locally well-posed for $(u_0,v_0) \in H^{s}(\R) \times H^{k}(\R)$ for any $(s,k)$ such that $s \geq 0$, $k\geq-2$, and $s-4 \leq k<\min(8s+1,s+2)$, provided $\delta \neq 0$.
\end{thm}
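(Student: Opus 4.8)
The plan is to prove local well-posedness via the contraction mapping principle applied to the Duhamel (integral) formulation of the system, working in a suitable scale of Fourier restriction (Bourgain) spaces $X^{s,b}$ adapted to the two distinct linear propagators. First I would rewrite (\ref{eq: SK}) in Duhamel form, writing $u(t)=e^{it\partial_x^2}u_0 - i\int_0^t e^{i(t-t')\partial_x^2}\bigl(\alpha uv+\beta|u|^2u\bigr)\,dt'$ and the analogous formula for $v$ with the Kawahara propagator $e^{-t(\gamma\partial_x^3-\delta\partial_x^5)}$. The natural function spaces are $X^{s,b}_{\mathrm{S}}$ built from the Schr\"odinger symbol $\tau+\xi^2$ and $X^{k,b}_{\mathrm{K}}$ built from the Kawahara symbol $\tau-\gamma\xi^3-\delta\xi^5$ (note the fifth-order term, active precisely because $\delta\neq0$, gives the dispersion relation far better decay at high frequency than the KdV case). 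I would localize in time on $[0,T]$, fix $b=\tfrac12+$ and $b'=-\tfrac12+$, and reduce the whole problem to proving the three multilinear (bilinear and trilinear) estimates
\begin{align}
\|uv\|_{X^{s,b'}_{\mathrm{S}}} &\lesssim \|u\|_{X^{s,b}_{\mathrm{S}}}\|v\|_{X^{k,b}_{\mathrm{K}}}, \label{eq:bilinS}\\
\|\partial_x(u\bar u)\|_{X^{k,b'}_{\mathrm{K}}} &\lesssim \|u\|_{X^{s,b}_{\mathrm{S}}}^2, \label{eq:bilinK}\\
\big\||u|^2u\big\|_{X^{s,b'}_{\mathrm{S}}} &\lesssim \|u\|_{X^{s,b}_{\mathrm{S}}}^3. \label{eq:cubic}
\end{align}
Once these hold, the standard machinery yields a contraction on a ball in $X^{s,b}_{\mathrm{S}}[0,T]\times X^{k,b}_{\mathrm{K}}[0,T]$ for $T$ small, together with uniqueness and continuous dependence, so the bulk of the work is purely in establishing \eqref{eq:bilinS}--\eqref{eq:cubic}.

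To prove the coupling estimates I would pass to the frequency side and analyze the resonance function. For the nonlinearity $uv$ feeding the Schr\"odinger equation, I would estimate the quantity $|\tau+\xi^2|$ of the output in terms of the modulations of the two inputs and the resonance identity relating $\tau+\xi^2$, $\tau_1+\xi_1^2$, and $\tau_2-\gamma\xi_2^3-\delta\xi_2^5$ on the frequency-convolution hyperplane $\xi=\xi_1+\xi_2$, $\tau=\tau_1+\tau_2$; similarly for $\partial_x(u\bar u)$ feeding the Kawahara equation I would compute the resonance $H=(\tau-\gamma\xi^3-\delta\xi^5)-(\tau_1+\xi_1^2)-(\tau_2-\xi_2^2)$ (or the conjugate version) and seek lower bounds on $\max$ of the three modulations. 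The decisive gain comes from the derivative loss $|\xi|$ in the Kawahara nonlinearity being controllable because the fifth-order resonance grows like $|\xi_1\xi_2(\xi_1-\xi_2)|$-type quintic expressions at high frequency, which is strong enough to absorb a derivative and to open up the whole admissible range of $k$. I would split into the usual high-high, high-low, and low-high interaction regions, use $L^4$ and $L^6$ Strichartz-type and bilinear $L^2$ estimates for the respective propagators, and in each region trade modulation for derivatives via Cauchy--Schwarz in the dual formulation.

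The constraints $s\ge0$, $k\ge-2$, $s-4\le k<\min(8s+1,s+2)$ should emerge precisely as the conditions under which \eqref{eq:bilinS} and \eqref{eq:bilinK} close: the upper bounds $k<s+2$ and $k<8s+1$ come from the two high-frequency regimes of the bilinear Schr\"odinger estimate (balancing the gain from the resonance against the regularity mismatch between $u$ and $v$), while the lower bound $k\ge s-4$ (equivalently the output of the Kawahara equation being able to lose up to four derivatives relative to the inputs) comes from \eqref{eq:bilinK}, reflecting that the fifth-order symbol permits $v$ to be measured four derivatives below $u$. I expect the main obstacle to be the bilinear Schr\"odinger estimate \eqref{eq:bilinS} in the resonant high-low interaction where the Schr\"odinger input and output have comparable large frequency and the Kawahara factor is at low frequency: here the resonance function degenerates, almost no modulation gain is available, and one must instead exploit a bilinear smoothing/transversality estimate between the Schr\"odinger and Kawahara characteristic surfaces. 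Establishing that transversality (the gradients of $\tau+\xi^2$ and $\tau-\gamma\xi^3-\delta\xi^5$ are generically non-parallel, with quantitative lower bounds driven by $\delta\neq0$) is what makes the endpoint regularity $s=0$, $k=-2$ attainable and is the technical heart of the argument.
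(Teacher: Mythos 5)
Your overall framework (Duhamel formulation, Fourier restriction spaces adapted to the two propagators, contraction mapping, reduction to multilinear estimates) matches the paper's strategy, but there are two genuine gaps. First, your list of required estimates omits the Kawahara self-nonlinearity $v\partial_x v$ entirely: you only record the coupling terms $uv$, $\partial_x(u\bar u)$ and the cubic term, yet the bilinear estimate $\|\partial_x(v_1v_2)\|\lesssim\|v_1\|\,\|v_2\|$ is both unavoidable and, at the claimed endpoint $k=-2$, the hardest piece. In the standard spaces $Y^{k,\frac12+}$ this estimate only holds for $k>-\tfrac32$ (the paper's Lemma \ref{lem: multilinear lwp}(iv)); to reach $k\ge-2$ the paper must import Kato's Besov-type weighted spaces $Y^{k}_{\lambda,\theta}$, $Z^{k}_{\lambda,\theta}$ with the weight split along the regions $P_1$, $P_2$, and the corresponding estimate (v) yields \emph{no} positive power of $T$. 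Consequently your plan of closing the contraction ``for $T$ small'' with $b=\tfrac12+$, $b'=-\tfrac12+$ cannot work down to $k=-2$: the paper instead rescales $(u_0,v_0)\mapsto(\lambda^4u_0(\lambda\cdot),\lambda^4v_0(\lambda\cdot))$, exploits $\|v_{0,\lambda}\|_{H^k}\le\lambda^{3/2}\|v_0\|_{H^k}$ for $k\ge-2$ to make the Kawahara datum small, and runs the contraction on an asymmetric ball ($\|u\|\le M$ large, $\|v\|\le\epsilon_0$ small), tracking negative powers $\lambda^{-\kappa}$ in every nonlinear estimate. None of this scaling machinery appears in your outline, and without it the fixed point does not close at the stated regularities.

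Second, your attribution of the constraints is reversed: in the paper the lower bound $k\ge s-4$ (i.e.\ $s\le k+4$) comes from the estimate for $uv$ landing in the Schr\"odinger space (Lemma \ref{lem: multilinear lwp}(i)), while the upper bound $k<\min(8s+1,s+2)$ comes from the estimate for $\partial_x(u_1\bar u_2)$ landing in the Kawahara space (Lemma \ref{lem: multilinear lwp}(vi)) --- not the other way around as you claim. This is a smaller issue than the first, but it suggests the case analysis you sketch would not actually produce the stated range of $(s,k)$. The resonance/transversality heuristics you describe are reasonable and are indeed the mechanism behind the paper's Lemma \ref{lemma: Wu} (the change of variables with Jacobian $|5\delta\xi^4+3\gamma\xi^2+2\xi_1|$), so that part of your intuition is sound; what is missing is the treatment of $v\partial_x v$ at low regularity and the scaling argument that substitutes for the unavailable gain in $T$.
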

\begin{thm} \label{thm: GWP}
The system (\ref{eq: SK}) is globally well-posed for $(u_0,v_0) \in L^{2}(\R) \times L^2(\R)$, provided $\delta \neq 0$.
\end{thm}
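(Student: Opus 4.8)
The plan is to derive Theorem~\ref{thm: GWP} from the local theory of Theorem~\ref{thm: LWP} by establishing an a priori bound on the $L^2\times L^2$ norm of the solution on every finite time interval. First I would check that the endpoint $(s,k)=(0,0)$ is admissible for Theorem~\ref{thm: LWP}: one has $s\ge 0$, $k\ge -2$, $s-4=-4\le 0$, and $k=0<1=\min(8s+1,s+2)$, so for data in $L^2\times L^2$ the system is locally well-posed on an interval whose length $\delta$ depends only on $\|u_0\|_{L^2}$ and $\|v_0\|_{L^2}$. By the usual continuation criterion it then suffices to prove that for each $T>0$,
\begin{equation*}
\sup_{t\in[0,T]}\bigl(\|u(t)\|_{L^2}+\|v(t)\|_{L^2}\bigr)<\infty ,
\end{equation*}
since such a bound keeps $\delta$ bounded below along the iteration and allows the solution to be continued up to $T$.

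The $u$-component is controlled by mass conservation: pairing the Schr\"odinger equation with $\bar u$ in $L^2$ and taking the imaginary part, the terms $\alpha\int v|u|^2$, $\beta\int|u|^4$ and $-\int|u_x|^2$ are all real and drop out, so $\frac{d}{dt}\|u(t)\|_{L^2}^2=0$ and $M:=\|u_0\|_{L^2}^2$ is conserved. The entire difficulty is thus the long-wave component. Pairing the Kawahara equation with $2v$, the dispersive contributions of $\partial_x^3 v$ and $\partial_x^5 v$ and the Burgers term $v\partial_x v$ all integrate to zero, leaving
\begin{equation}\label{eq: vgrowth-plan}
\frac{d}{dt}\|v(t)\|_{L^2}^2=-2\varepsilon\int v_x\,|u|^2\,dx=4\varepsilon\int v\,\Re(\bar u\,u_x)\,dx .
\end{equation}
The right-hand side carries one derivative more than the $L^2$ regularity of $v$ controls, so \eqref{eq: vgrowth-plan} cannot be closed by the energy method; this is precisely the below-energy regime for which the Colliander--Holmer--Tzirakis scheme \cite{CHT} is designed.

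To handle \eqref{eq: vgrowth-plan} I would, following \cite{CHT}, pass to a corrected quantity $\mathcal N(t)=\|v(t)\|_{L^2}^2+\mathcal C(t)$, where $\mathcal C$ is built from the coupling so that $\frac{d}{dt}\mathcal C$ cancels the leading part of the right-hand side of \eqref{eq: vgrowth-plan}. The natural ingredient is the Schr\"odinger mass-current identity $\partial_t|u|^2+\partial_x J=0$ with $J=2\Im(\bar u\,u_x)$, which lets the factor $|u|^2$ be traded for a time derivative; because the long-wave equation carries the nonlinear term $v\partial_x v$, the transformation realizing this cancellation is itself nonlinear in $v$, and this is the sense in which the decoupling operator is nonlinear, in contrast to the linear splitting used for the Zakharov and Klein--Gordon--Schr\"odinger systems in \cite{CHT}. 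On a local interval $[t_0,t_0+\delta]$ I would estimate the remainder of $\frac{d}{dt}\mathcal N$, which is multilinear in $u$ and $v$, by the Bourgain $X^{s,b}$ norms furnished by the proof of Theorem~\ref{thm: LWP}; here the conserved mass $M$ absorbs every Schr\"odinger factor, the strong local smoothing of the fifth-order dispersion absorbs the surplus $x$-derivative on $v$, and the time-localization extracts a positive power $\delta^{\theta}$. Iterating this increment over the $O(T/\delta)$ steps covering $[0,T]$ and running a Gronwall/bootstrap argument then yields a finite bound on $\|v(t)\|_{L^2}$, which together with mass conservation gives the required a priori bound.

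The main obstacle is the construction and estimation of the correction $\mathcal C$ in this nonlinear setting. One must choose $\mathcal C$ so that not only the bad coupling term in \eqref{eq: vgrowth-plan} but also the new contributions it generates against the Burgers nonlinearity $v\partial_x v$ and against the high-order dispersion $\partial_x^5 v$ are cancelled or rendered lower order, and one must then verify that the surviving multilinear remainder genuinely gains a power of $\delta$ when measured in the local $X^{s,b}$ spaces. Unlike the purely linear wave coupling treated in \cite{CHT}, the correction here interacts with the quadratic Kawahara nonlinearity, so these multilinear estimates, together with the check that the per-step growth sums to a finite bound on $[0,T]$, are the technical heart of the argument.
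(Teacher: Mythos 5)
Your proposal correctly isolates the difficulty (mass conservation controls $u$, while $\frac{d}{dt}\|v\|_{L^2}^2=-2\varepsilon\int v_x|u|^2\,dx$ loses a derivative and cannot be closed by energy methods), but the mechanism you then invoke is both a misreading of \cite{CHT} and, as written, a genuine gap. The Colliander--Holmer--Tzirakis scheme is not a corrected-functional/normal-form argument: it does not modify $\|v\|_{L^2}^2$ by a correction term $\mathcal{C}$ whose time derivative cancels the bad coupling, and no Gronwall iteration is involved. Your entire argument hinges on the existence of a $\mathcal{C}$ that simultaneously cancels the coupling term, its interaction with the Burgers nonlinearity $v\partial_xv$, and the contributions of $\partial_x^5v$, with a remainder gaining a power of the local time --- and you never construct $\mathcal{C}$, never exhibit the claimed cancellations, and explicitly defer ``the technical heart'' to future work. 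At the regularity $v\in L^2$ (two derivatives below the energy space) there is no evidence such a correction exists, so the proof does not close. A secondary issue: quoting Theorem \ref{thm: LWP} at $(s,k)=(0,0)$ gives a local time depending on the data norms, but not the quantitative increment $\|v(T)\|_{L^2}\le\|v_0\|_{L^2}+CT^{1/2}\|u_0\|_{L^2}^2$ that the iteration requires; the local theory has to be rerun in spaces carrying explicit powers of $T$ (this is why the paper works in $X^{0,\frac38,1}\times Y^{0,\frac38,1}$ with the $T^{\frac12-b}$ and $T^{1-b-c}$ gains of Lemma \ref{lem: group, duhamel}).

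What the paper actually does is simpler and avoids any cancellation identity. One introduces $z$ solving the \emph{full nonlinear} Kawahara equation with $z(0)=v_0$ --- this is the nonlinear decoupling operator, and Lemma \ref{lem: Kawahara conservation} gives $\|z(t)\|_{L^2}=\|v_0\|_{L^2}$ exactly --- and sets $w=v-z$, which satisfies a forced Kawahara-type equation with $w(0)=0$. A four-component contraction for $(u,v,w,z)$ on $[0,T]$ with $T\sim\|v_0\|_{L^2}^{-2}$, using the multilinear estimates of Lemma \ref{lem: multilinear}, shows that the zero initial datum forces $\|w\|_{C_tL^2}\lesssim T^{\frac12}\|u_0\|_{L^2}^2$. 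Hence $\|v(T)\|_{L^2}\le\|v_0\|_{L^2}+CT^{\frac12}\|u_0\|_{L^2}^2$: the growth is an additive increment per step, controlled by the conserved mass, and summing over $m\sim\|v_0\|_{L^2}T^{-\frac12}\|u_0\|_{L^2}^{-2}$ steps shows $\|v\|_{L^2}$ can at most double on a time interval of length $mT\sim\|u_0\|_{L^2}^{-2}$, which is uniform; global existence follows. If you want to salvage your write-up, replace the corrected-quantity ansatz by this splitting; the ``nonlinearity of the decoupling'' refers to $z$ solving the nonlinear Kawahara flow, not to a nonlinear correction of the $L^2$ functional.
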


To prove Theorem \ref{thm: LWP}, we combine the arguments given in \cite{GW, Kato, Wu}. The low-regularity local well-posedness result given here is sharp in the sense that the solution maps of the cubic nonlinear Schr\"odinger equation and the Kawahara equation fail to be uniformly continuous in $H^{0-}$ and $H^{-2-}$ respectively, see \cite{Kato, KPV}. 

The proof of Theorem \ref{thm: GWP} is based on the argument given in \cite{CHT}. In \cite{CHT} the authors provide a globalization scheme for the abstract evolution system
\begin{equation*}
\begin{cases}
Ku = F(u,v),  \\
Lv = G(u), \\
u(x,0) \in L^{2}, \quad v(x,0) \in H^{s},
\end{cases} 
\end{equation*}
where $K, L$ are linear differential operators of evolution type and $F,G$ are nonlinearities. Here the linear semigroups corresponding to $K, L$ are assumed to be unitary, and $u$ conserves the $L^{2}$-norm. In the present paper we generalize this scheme to prove the global well-posedness for the system (\ref{eq: SK}), where $L$ is the nonlinear Kawahara evolution $Lv=\partial_{t} v + \gamma \partial^{3}_{x}v-\delta \partial^{5}_{x}v + v\partial_{x}v$. This paper is the first to apply this technique when the operator $L$ is nonlinear.

There are at least three conserved quantities for the system (\ref{eq: SK}):
\begin{align}\label{eq: mass conservation}
    \mathcal{M}(t):=\int_{\R} |u|^{2} dx,
\end{align}
\begin{align}\label{eq: momentum conservation}
    \mathcal{Q}(t):=\int_{\R} \alpha v^{2}+2\varepsilon \operatorname{Im}(u \partial_{x}\overline{u}) dx,
\end{align}
and
\begin{align}\label{eq: energy conservation}
    \mathcal{E}(t):=\int_{\R} \alpha \varepsilon |u|^{2}v-\frac{\alpha}{6}v^{3}+\frac{\beta \varepsilon}{2}|u|^4+\frac{\alpha \gamma}{2}|\partial_{x}v|^{2}-\frac{\alpha \delta}{2}|\partial^{2}_{x}v|^{2}+\varepsilon |\partial_{x}u|^{2} dx.
\end{align}
In our globalization argument we do not take advantage of (\ref{eq: momentum conservation}) and (\ref{eq: energy conservation}). We only make use of (\ref{eq: mass conservation}) and the $L^2$-norm conservation of the Kawahara equation demonstrated in Lemma \ref{lem: Kawahara conservation}. This allows us to prove the global well-posedness for the system (\ref{eq: SK}) in $L^2 \times L^2$ for arbitrary coupling coefficients $\alpha, \beta, \gamma, \delta, \varepsilon$, regardless of their signs provided $\delta \neq 0$.

Finally we note that our globalization argument does not directly apply to the $\delta=0$ case. The main difficulty in this case is the failure of the estimate
\begin{align*}
    \|\partial_{x}(v_1 v_2)\|_{Y^{0,-c,\infty}} \lesssim \|v_1\|_{Y^{0,b,1}} \|v_2\|_{Y^{0,b,1}}, \quad 0<b,c<\frac12, \quad 2b+c=1
\end{align*}
when $\delta=0$, which is essential for our argument (see Section \ref{sec: notations} for the definitions of the functions spaces). We therefore leave this case open.

This paper is organized as follows. In Section \ref{sec: notations} we define some function spaces and fix the notation we will use throughout this paper. In Section \ref{sec: prelim} we recall some basic estimates and provide proofs of some technical inequalities. In section \ref{sec: LWP} we prove the local well-posedness for the system (\ref{eq: SK}). Finally in Section \ref{sec: GWP} we establish the global theory.  

\section{Notations \& Function spaces} \label{sec: notations}

Define the one-dimensionl Fourier transform as
\begin{align*}
    \hat{f}(\xi)=\int_{\R}e^{-ix\xi}f(x)dx.
\end{align*} 
If $f(x,t)$ is a function on $\R \times \R$ we define its space-time Fourier transform $\hat{f}(\xi,\tau)$ by
\begin{align*}
    \hat{f}(\xi,\tau)=\int_{\R \times \R}e^{-i(x\xi+t\tau)}u(x,t)dx dt.
\end{align*} 
The inverse Fourier transforms are given by
\begin{align*}
    & g^{\vee}(x):=\frac{1}{2\pi}\int_{\R}e^{i x \xi}g(\xi)d\xi, \\
    & g^{\vee}(x,t):=\frac{1}{(2\pi)^2}\int_{\R \times \R}e^{i (x \xi+ t \tau)}g(\xi,\tau)d\xi d\tau.
\end{align*}
Let $\lb \xi \rb :=\sqrt{1+|\xi|^{2}}$. Define the Sobolev space $H^{s}(\R)$ by the norm
\begin{align*}
 \| f \|_{H^{s}(\R)}=\left (\int_{\R}\lb \xi \rb^{2s}|\hat{f}(\xi)|^{2} d\xi \right )^{\frac{1}{2}}.
\end{align*}
Let $p(\xi)=\gamma \xi^3+\delta \xi^5$. Let $S(t)=e^{it\partial_{x}^{2}}$ and $W(t)=e^{it p(\partial_{x}/i)}$. We define the Fourier restriction norm spaces $X^{s,b}$ and $Y^{s,b}$ by the norms
\begin{align*}
    &\| u \|_{X^{s,b}} = \left\|\lb \xi \rb^{s} \lb \tau + \xi^2 \rb^b \hat{u}(\xi, \tau) \right\|_{L^2_{\xi,\tau}},\\
    &\| u \|_{Y^{s,b}} = \left\|\lb \xi \rb^{s} \lb \tau - p(\xi) \rb^b \hat{u}(\xi, \tau) \right\|_{L^2_{\xi,\tau}}.
\end{align*}
Let $\varphi_{0}:\R \to [0,1]$ denote an even smooth function such that $\varphi_0(\xi)=1$ for $|\xi| \leq \frac54$ and $\varphi_0(\xi)=0$ for $|\xi| > \frac32$. For $j \in \N$, we set $\varphi_{j}(\xi)=\varphi_{0}(\xi/2^{j})-\varphi_{0}(\xi/2^{j-1})$.
We will need the following Besov-type modifications of Fourier restriction norm spaces introduced in \cite{CKS} and applied in our context in \cite{BHHT, GP}:
\begin{align*}  
    & \|u\|_{X^{s,b,1}}=\sum_{j=0}^{\infty}2^{bj}\|\lb \xi \rb^{s}\varphi_{j}(\tau+\xi^2)\hat{u}(\xi, \tau)\|_{L^2_{\xi,\tau}}, \\
    & \|u\|_{Y^{s,b,1}}=\sum_{j=0}^{\infty}2^{bj}\|\lb \xi \rb^{s}\varphi_{j}(\tau-p(\xi))\hat{u}(\xi, \tau)\|_{L^2_{\xi,\tau}},
\end{align*}
and
\begin{align*} 
    & \|u\|_{X^{s,b,\infty}}=\sup_{j \geq 0}2^{bj}\|\lb \xi \rb^{s}\varphi_{j}(\tau+\xi^2)\hat{u}(\xi, \tau)\|_{L^2_{\xi,\tau}}, \\
    & \|u\|_{Y^{s,b,\infty}}=\sup_{j \geq 0}2^{bj}\|\lb \xi \rb^{s}\varphi_{j}(\tau-p(\xi))\hat{u}(\xi, \tau)\|_{L^2_{\xi,\tau}}.
\end{align*}
Let $\psi$ be a smooth function such that $\psi=1$ on $[-1,1]$ and $\psi = 0$ outside $[-2,2]$. We denote $\psi_{T}(t):=\psi(t/T)$. For $N \geq 0$, $P^N f$ will denote the high frequency projection $(\chi_{|\xi|\geq N}\hat{f}(\xi))^{\vee}$. Finally, in this paper we abbreviate the H\"older inequality
\begin{align*}
    \left |\int f_1\cdots f_n d\xi d\tau \right | \leq \|f_1\|_{L^{p_1}_{\xi,\tau}}\cdots \|f_1\|_{L^{p_1}_{\xi,\tau}}, \quad \frac{1}{p_1}+\cdots+\frac{1}{p_n}=1
\end{align*}
by the $L^{p_1}\dots L^{p_n}$-H\"older inequality.

\section{Preliminary estimates} \label{sec: prelim}

The following estimates for the Schr\"odinger evolution are well known.
\begin{lem} \cite[Lemma 2.1]{Takaoka} \label{lem: Schrodinger group estimates}
We have
 \begin{align*}
     & \| S(t)g \|_{L^{q}_{t}L^{r}_{x}} \lesssim \|g \|_{L^{2}}, \quad \frac{2}{q}+\frac{1}{r}=\frac{1}{2}, \quad 2\leq q,r \leq \infty \qquad \text{(Strichartz estimate)}, \\
     & \| S(t)g \|_{L^{4}_{x}L^{\infty}_{t}} \lesssim \|g \|_{H^{\frac14}} \qquad \text{(Maximal function estimate)}.
 \end{align*}
\end{lem}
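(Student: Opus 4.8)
The plan is to establish the two inequalities separately, each by classical arguments.

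For the Strichartz estimate, I would start from the pointwise dispersive bound $\|S(t)g\|_{L^\infty_x}\lesssim |t|^{-1/2}\|g\|_{L^1_x}$, which is immediate from the explicit representation $S(t)g=(4\pi i t)^{-1/2}e^{i|\cdot|^2/4t}\ast g$. Interpolating this with the unitarity identity $\|S(t)g\|_{L^2_x}=\|g\|_{L^2_x}$ yields $\|S(t)g\|_{L^r_x}\lesssim |t|^{-(1/2-1/r)}\|g\|_{L^{r'}_x}$ for $2\le r\le\infty$. The estimate then follows from the standard $TT^\ast$ argument: by duality it is enough to bound the bilinear form $\iint \langle S(t-s)F(s),F(t)\rangle\,ds\,dt$, and inserting the dispersive decay reduces the matter to the one-dimensional Hardy--Littlewood--Sobolev inequality $\big\||t|^{-(1/2-1/r)}\ast h\big\|_{L^q_t}\lesssim\|h\|_{L^{q'}_t}$, whose scaling requirement $1/2-1/r=2/q$ is exactly the admissibility relation $2/q+1/r=1/2$. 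Since $2/q+1/r=1/2$ with $r\le\infty$ forces $q\ge 4$, the range lies strictly away from the forbidden endpoint and no Keel--Tao endpoint machinery is needed.

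For the maximal function estimate, I would use a Littlewood--Paley decomposition and reduce to the frequency-localized bound $\|S(t)P^N g\|_{L^4_x L^\infty_t}\lesssim N^{1/4}\|P^N g\|_{L^2_x}$ for $g$ with $\hat g$ supported in $|\xi|\sim N$. Using the scaling identity $S(t)g(x)=N\,(S(N^2 t)f)(Nx)$ for the unit-frequency profile $f$, this frequency-localized estimate is equivalent to the single unit-frequency inequality $\|S(t)f\|_{L^4_x L^\infty_t}\lesssim\|f\|_{L^2_x}$. I would prove the latter through the $TT^\ast$ method applied to the maximal operator $g\mapsto \sup_t |S(t)f|$, which reduces to controlling an oscillatory kernel of the form $\sup_t\big|\int e^{i(x\xi+t\xi^2)}\chi(\xi)\,d\xi\big|$ by stationary phase and showing the resulting kernel is suitably integrable; this is the Kenig--Ponce--Vega estimate \cite{KPV}. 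Finally the dyadic pieces are reassembled: the gains $N^{1/4}$ combine with the Littlewood--Paley square function to reconstitute the $H^{1/4}$ norm on the right-hand side, which is permissible because the output exponent $4>2$ allows the frequency blocks to be summed.

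The Strichartz half is mechanical once the dispersive bound is fixed. I expect the maximal function estimate to be the main obstacle: the presence of $\sup_t$ prevents a direct application of Plancherel, and one must exploit the nondegeneracy of the phase $\xi\mapsto t\xi^2$ through stationary phase in order to extract the sharp quarter-derivative loss $N^{1/4}$. Globalizing in time and summing the Littlewood--Paley pieces without loss also demand care, since $L^4_x L^\infty_t$ is not built on $L^2$ and the relevant orthogonality among frequency blocks is only approximate.
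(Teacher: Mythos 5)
The paper offers no proof of this lemma: it is imported verbatim from \cite[Lemma 2.1]{Takaoka} (the estimates themselves go back to Strichartz and to Kenig--Ponce--Vega \cite{KPV}), so there is no internal argument to measure yours against. Your outline of the Strichartz half is the standard and correct one: the dispersive bound from the explicit kernel, Riesz--Thorin against unitarity, then $TT^{*}$ combined with one-dimensional Hardy--Littlewood--Sobolev, with the admissibility line $2/q+1/r=1/2$ falling out of the HLS scaling exactly as you say. In one dimension the only degenerate pair is $(q,r)=(\infty,2)$, where the claim is just unitarity, so the entire stated range is covered and, as you note, no endpoint machinery is required.

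The one genuine soft spot is the final summation in the maximal function half. The frequency-localized bound $\| S(t)P_{N}g\|_{L^{4}_{x}L^{\infty}_{t}}\lesssim N^{1/4}\|P_{N}g\|_{L^{2}}$ and your scaling reduction to unit frequency are both fine, but reassembling the dyadic pieces by the triangle inequality produces $\sum_{N}N^{1/4}\|P_{N}g\|_{L^{2}}$, which is the Besov norm $B^{1/4}_{2,1}$, strictly stronger than $H^{1/4}$; Cauchy--Schwarz in $N$ then costs an $\epsilon$ of derivative. The remark that ``$4>2$ allows the frequency blocks to be summed'' does not by itself supply the missing orthogonality: after taking $\sup_{t}$ the pieces $\sup_{t}|S(t)P_{N}g|$ no longer have localized Fourier support in $x$, so the Littlewood--Paley square-function equivalence in the outer $L^{4}_{x}$ is unavailable, and Minkowski at exponent $4$ merely reproduces the triangle inequality. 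To land exactly on $H^{1/4}$ you need either a bilinear almost-orthogonality estimate $\|\sup_{t}|S(t)P_{N}g|\cdot\sup_{t}|S(t)P_{M}g|\|_{L^{2}_{x}}\lesssim(\min(N,M)/\max(N,M))^{\epsilon}N^{1/4}M^{1/4}\|P_{N}g\|_{L^{2}}\|P_{M}g\|_{L^{2}}$ followed by Schur, or, more cleanly, the direct Kenig--Ponce--Vega argument: apply $TT^{*}$ to $|\partial_x|^{-1/4}S(t)$ globally, bound the kernel $\int e^{i((x-y)\xi+(t-s)\xi^{2})}|\xi|^{-1/2}\,d\xi$ by $|x-y|^{-1/2}$ uniformly in $t,s$ via van der Corput, and conclude by one-dimensional fractional integration. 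Since the lemma is stated (and used in the interpolations of Section \ref{sec: prelim}) at the sharp regularity $H^{1/4}$, this step should be closed rather than waved through.
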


For the Kawahara evolution, we have the following estimates.

\begin{lem} \cite[Lemma 2.3]{Huo} \label{lem: Kawahara group estimates}
Let $p(\xi)=\gamma \xi^3+\delta \xi^5$ and assume that $\delta \neq 0$. Then there exists $N>0$ only depending on $\delta, \varepsilon$ such that we have
 \begin{align*}
     & \left \| P^{N}W(t)g \right \|_{L^{6}_{t,x}} \lesssim \|g \|_{H^{-\frac12}} \qquad \text{(Strichartz estimate)}, \\
     &  \left \| P^{N}W(t)g \right \|_{L^{\infty}_{x}L^{2}_{t}} \lesssim \|g \|_{H^{-2}} \qquad \text{(Kato smoothing estimate)}, \\
     & \left \| P^{N}W(t)g \right \|_{L^{4}_{x}L^{\infty}_{t}} \lesssim \|g \|_{H^{\frac14}} \qquad \text{(Maximal function estimate)}. \\ 
\end{align*}
\end{lem}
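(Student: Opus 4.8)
This is a standard package of linear estimates for a fifth-order dispersive propagator, and I would prove the three bounds in increasing order of difficulty after a common reduction. Since $\delta \neq 0$, for $N$ sufficiently large (depending on the coefficients) the quintic term of $p$ dominates on the range $|\xi| \geq N$: there $p'(\xi) = 3\gamma\xi^2 + 5\delta\xi^4$ keeps the sign of $\delta$ with $|p'(\xi)| \sim |\xi|^4$, and $p''(\xi) = 6\gamma\xi + 20\delta\xi^3$ keeps the sign of $\delta$ with $|p''(\xi)| \sim |\xi|^3$. In particular $p$ is strictly monotone on each of $[N,\infty)$ and $(-\infty,-N]$, and its second derivative is bounded below on dyadic blocks. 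I would decompose $P^N g$ into Littlewood--Paley pieces $P_j g := (\varphi_j \hat{g})^{\vee}$ with $2^j \gtrsim N$ and prove each estimate on a single block $|\xi| \sim 2^j$, summing at the end.

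The Kato smoothing bound is the cleanest. Fixing $x$ and applying Plancherel in $t$, I would change variables $\eta = p(\xi)$ on each of the two monotone branches of $\{|\xi| \geq N\}$; the Jacobian contributes a factor $|p'(\xi)|^{-1}$, so that
\[
\| P^N W(t)g(x)\|_{L^2_t}^2 \lesssim \int_{|\xi|\geq N}\frac{|\hat{g}(\xi)|^2}{|p'(\xi)|}\,d\xi \lesssim \int_{\R}\langle\xi\rangle^{-4}|\hat{g}(\xi)|^2\,d\xi = \|g\|_{H^{-2}}^2 .
\]
Since the right-hand side is independent of $x$, taking the supremum in $x$ gives the estimate.

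For the Strichartz estimate I would first establish the fixed-frequency dispersive bound. The kernel of $W(t)P_j$ is $K_t^j(x) = \int e^{i(x\xi + t p(\xi))}\varphi_j(\xi)\,d\xi$, whose phase has second derivative $t\,p''(\xi)$ of size $\sim |t|\,2^{3j}$ on the support. Van der Corput's second-derivative lemma then yields $\|K_t^j\|_{L^\infty_x} \lesssim (|t|\,2^{3j})^{-1/2}$, hence $\|W(t)P_j g\|_{L^\infty_x}\lesssim (|t|\,2^{3j})^{-1/2}\|P_j g\|_{L^1_x}$. Interpolating this against the unitary $L^2$ bound and running the standard $TT^*$ argument with the one-dimensional Hardy--Littlewood--Sobolev inequality (the decay exponent $\tfrac12$ makes the space-time pair $(q,r)=(6,6)$ admissible) produces $\|W(t)P_j g\|_{L^6_{t,x}} \lesssim 2^{-j/2}\|P_j g\|_{L^2}$. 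Finally, summing in $j$ via the Littlewood--Paley square-function estimate in $L^6$ and using $2^{-j/2}\|P_j g\|_{L^2}\sim \|P_j g\|_{H^{-1/2}}$ on the high-frequency region yields $\|P^N W(t)g\|_{L^6_{t,x}}\lesssim \|g\|_{H^{-1/2}}$; a scaling check with $p(\xi)=\delta\xi^5$ confirms that $H^{-1/2}$ is the scale-invariant regularity here.

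The maximal function estimate is the delicate one, and I expect the main difficulty to lie there, since controlling $\sup_t$ globally in time without losing $L^4_x$ integrability is not captured by the Plancherel or $TT^*$ arguments above. I would follow the oscillatory-integral method of Kenig--Ponce--Vega: after the same dyadic localization, establish pointwise bounds on $K_t^j$ exhibiting decay both away from and at the stationary point $x = -t\,p'(\xi)$ (using the sign-definiteness of $p'$ and the lower bound on $p''$), and then balance the resulting local-smoothing gain against the $L^4_x$ norm. The sharp loss of $\tfrac14$ derivatives emerges from this balance, and a scaling computation again shows $\dot{H}^{1/4}$ to be scale-invariant for the fifth-order flow, exactly as for the Schr\"odinger maximal function estimate of Lemma \ref{lem: Schrodinger group estimates}. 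As these are by now classical, I would ultimately quote the detailed oscillatory-integral computations from \cite{Huo}; the hypothesis $\delta \neq 0$ is precisely what guarantees the nonvanishing of $p'$ and $p''$ at high frequency on which every step above depends.
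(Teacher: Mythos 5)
The paper offers no proof of this lemma---it is quoted verbatim from \cite[Lemma 2.3]{Huo}---and your sketch reproduces the standard arguments (Plancherel plus the change of variables $\eta=p(\xi)$ on the monotone branches for Kato smoothing, van der Corput and $TT^*$ with Hardy--Littlewood--Sobolev for the $L^6$ Strichartz bound, and the Kenig--Ponce--Vega oscillatory-integral method for the maximal function) that the cited reference relies on, so your proposal is correct and takes essentially the same route. Your remark that $N$ must be chosen so that the quintic term dominates also implicitly corrects a typo in the statement: $N$ should depend on $\gamma$ and $\delta$, not on $\varepsilon$.
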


The following lemma allows us to transfer the above estimates to the Fourier restriction norm spaces.

\begin{lem} \cite[Proposition 1.2]{GP} \label{lem: extension}
    Let $Z \subset \mathcal{S}'(\R \times \R)$ be a Banach space of space-time functions such that
    \begin{align*}
        \|g(t)u(x,t)\|_{Z} \leq \|g\|_{L^{\infty}}\|u\|_{Z}
    \end{align*}
    holds for any $g \in L^{\infty}(\R)$ and $u \in Y$. Assume that the estimate
    \begin{align*}
        \left \|P^N S(t)g \right \|_{Z} \lesssim \|g\|_{H^{s}} 
    \end{align*}
    holds true for some $N \geq 0$ and any $g \in H^{s}$. Then we have
    \begin{align*}
        \left \|P^N u \right \|_{Z} \lesssim \|u\|_{X^{s,\frac12,1}}
    \end{align*}
    for any $u \in X^{s,\frac12,1}$.  A similar statement holds if $S(t)$ and $X^{s,\frac12,1}$ are respectively replaced by $W(t)$ and $Y^{s,\frac12,1}$.
\end{lem}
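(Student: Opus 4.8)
The plan is to invoke the standard \emph{transference principle} for Fourier restriction spaces: I decompose $u$ according to its modulation $\lambda := \tau + \xi^2$ and represent each modulation block as a continuous superposition of time-modulated free Schr\"odinger evolutions, then feed the assumed linear estimate into each slice. By symmetry I would only treat the Schr\"odinger/$X^{s,\frac12,1}$ assertion; the Kawahara/$Y^{s,\frac12,1}$ case follows verbatim after replacing $\xi^2$ by $-p(\xi)$ and $S(t)$ by $W(t)$, since the argument never uses the specific form of the dispersion relation. All manipulations would be carried out for Schwartz $u$ and extended by density.

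First I would record the superposition identity. Setting $\hat{g_\lambda}(\xi) := \hat{u}(\xi,\lambda-\xi^2)$, the change of variables $\tau \mapsto \lambda = \tau + \xi^2$ in the Fourier inversion formula gives
\[
u(x,t) = \frac{1}{2\pi}\int_{\R} e^{it\lambda}\, S(t)g_\lambda(x)\,d\lambda,
\]
because the multiplier of $S(t)=e^{it\p_x^2}$ is $e^{-it\xi^2}$ and $-it\xi^2 + it\lambda = it\tau$. Localizing in modulation, I set $\hat{u_j}(\xi,\tau):=\varphi_j(\tau+\xi^2)\hat{u}(\xi,\tau)$, so the corresponding $\lambda$-integral is supported in $|\lambda|\sim 2^j$. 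Applying $P^N$, pulling the $Z$-norm inside the $\lambda$-integral by Minkowski's inequality, and using the multiplication hypothesis with $g(t)=e^{it\lambda}$ (so $\|g\|_{L^\infty}=1$) together with $\|P^N S(t)g_\lambda\|_Z \lesssim \|g_\lambda\|_{H^s}$, I obtain
\[
\|P^N u_j\|_Z \lesssim \int_{|\lambda|\sim 2^j}\varphi_j(\lambda)\,\|g_\lambda\|_{H^s}\,d\lambda .
\]
Since the $\lambda$-domain has length $\sim 2^j$, Cauchy--Schwarz produces the factor $2^{j/2}$, and undoing the change of variables (using $\varphi_j^2\le\varphi_j\le 1$) yields
\[
\|P^N u_j\|_Z \lesssim 2^{j/2}\,\big\|\lb \xi \rb^{s}\varphi_j(\tau+\xi^2)\hat{u}\big\|_{L^2_{\xi,\tau}} .
\]
Summing over $j$ and applying the triangle inequality $\|P^N u\|_Z \le \sum_j \|P^N u_j\|_Z$ reproduces exactly the norm $\|u\|_{X^{s,\frac12,1}}$ on the right-hand side.

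The point where the precise structure of the norm matters---which I regard as the crux rather than a genuine obstacle, as the scheme itself is routine---is the interplay between the exponent $b=\frac12$ and the $\ell^1$ Besov-type summation over modulation blocks. The factor $2^{j/2}$ from Cauchy--Schwarz over a shell of length $2^j$ is matched precisely by the weight $2^{bj}$ with $b=\frac12$; at this endpoint the naive bound $\int\|g_\lambda\|_{H^s}\,d\lambda \lesssim \|u\|_{X^{s,\frac12}}$ fails because $\lb\lambda\rb^{-\frac12}$ is not square-integrable, which is exactly what forces the dyadic decomposition. Moreover, since $Z$ is an arbitrary Banach space carrying no orthogonality between the blocks $u_j$, the only summation available is the triangle inequality, so the $\ell^1$ summation built into $X^{s,\frac12,1}$ (rather than the $\ell^2$ of the classical $X^{s,\frac12}$) is both the natural and the necessary target. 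Finally, I would note that the multiplication hypothesis is used solely to absorb the harmless phase $e^{it\lambda}$.
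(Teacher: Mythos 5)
Your argument is correct and is precisely the standard foliation-over-modulation (transference) proof that the cited reference \cite{GP} uses for this lemma; the paper itself gives no proof, only noting that the $N=0$ argument of \cite{GP} extends to general $N$, which your superposition identity handles automatically since $P^N$ is a spatial Fourier multiplier commuting with $S(t)$ and with the $\lambda$-integral. Your identification of the endpoint matching $2^{j/2}\leftrightarrow 2^{bj}$, $b=\tfrac12$, as the reason the $\ell^1$-Besov refinement is needed is exactly the right observation.
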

In \cite{GP} Lemma \ref{lem: extension} is proved only for $N=0$, however the same proof readily applies to the general case. Also note that this lemma directly implies the embeddings $X^{s,\frac12,1} \hookrightarrow C_{t}H^s_{x}$ and $Y^{s,\frac12,1} \hookrightarrow C_{t}H^s_{x}$. From Lemmas \ref{lem: Schrodinger group estimates}, \ref{lem: Kawahara group estimates}, and \ref{lem: extension}, we obtain the following estimates:
\begin{lem}
We have
 \begin{align}\label{eq: Schrodinger strichartz}
        \| f \|_{L^{q}_{t}L^{r}_{x}} \lesssim \|f \|_{X^{0,\frac12,1}}, \quad \frac{2}{q}+\frac{1}{r}=\frac{1}{2}, \quad 2\leq q,r \leq \infty,
\end{align}
\begin{align} \label{eq: Sch maximal}
    \| f \|_{L^{4}_{x}L^{\infty}_{t}} \lesssim \|f\|_{X^{\frac14,\frac12,1}}.
\end{align}
Also, for $N>0$ given in Lemma \ref{lem: Kawahara group estimates} we have
\begin{align}\label{eq: Kawahara L6 strichartz}
         \left \| P^N f \right \|_{L^{6}_{t,x}} \lesssim \|f \|_{Y^{-\frac12,\frac12,1}},
    \end{align}
\begin{align} \label{eq: Kawahara Kato}
    \left \| P^N f \right \|_{L^{\infty}_{x}L^{2}_{t}} \lesssim \|f\|_{Y^{-2,\frac12,1}},
\end{align}
\begin{align} \label{eq: Kawahara maximal}
    \left \| P^N f \right \|_{L^{4}_{x}L^{\infty}_{t}} \lesssim \|f\|_{Y^{\frac14,\frac12,1}}.
\end{align}
\end{lem}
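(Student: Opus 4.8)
The plan is to obtain all five estimates as direct consequences of the transfer principle in Lemma \ref{lem: extension}, fed by the linear group estimates of Lemmas \ref{lem: Schrodinger group estimates} and \ref{lem: Kawahara group estimates}. For each target inequality I would take $Z$ to be the mixed-norm space-time Lebesgue space appearing on its left-hand side: $Z=L^{q}_{t}L^{r}_{x}$ for (\ref{eq: Schrodinger strichartz}), $Z=L^{4}_{x}L^{\infty}_{t}$ for (\ref{eq: Sch maximal}) and (\ref{eq: Kawahara maximal}), $Z=L^{6}_{t,x}$ for (\ref{eq: Kawahara L6 strichartz}), and $Z=L^{\infty}_{x}L^{2}_{t}$ for (\ref{eq: Kawahara Kato}). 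Each such $Z$ is a Banach space.

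First I would verify the $L^{\infty}$-multiplier hypothesis $\|g(t)u\|_{Z}\leq \|g\|_{L^{\infty}}\|u\|_{Z}$ required by Lemma \ref{lem: extension}. Because the multiplier $g=g(t)$ depends only on the time variable, this is immediate in every case: for $L^{q}_{t}L^{r}_{x}$ and $L^{6}_{t,x}$ one factors $|g(t)|$ out of the inner spatial norm and bounds it by $\|g\|_{L^{\infty}_{t}}$ before taking the outer time norm, while for $L^{4}_{x}L^{\infty}_{t}$ and $L^{\infty}_{x}L^{2}_{t}$ one factors $\|g\|_{L^{\infty}_{t}}$ out of the inner temporal norm pointwise in $x$ and then takes the outer spatial norm. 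In all cases the inequality holds with constant $1$.

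With the hypothesis verified, I would read off the linear estimates. For (\ref{eq: Schrodinger strichartz}) and (\ref{eq: Sch maximal}) I apply Lemma \ref{lem: Schrodinger group estimates}, which furnishes precisely $\|S(t)g\|_{Z}\lesssim \|g\|_{H^{s}}$ with $s=0$ and $s=\frac14$ respectively (here $N=0$, so $P^{N}$ is the identity); Lemma \ref{lem: extension} then upgrades these to the stated $X^{s,\frac12,1}$ bounds. For (\ref{eq: Kawahara L6 strichartz}), (\ref{eq: Kawahara Kato}), and (\ref{eq: Kawahara maximal}) I apply the $W(t)$ statements of Lemma \ref{lem: Kawahara group estimates} with the frequency threshold $N$ provided there and with $s=-\frac12,\,-2,\,\frac14$ respectively, and invoke the $W(t)$/$Y^{s,\frac12,1}$ version of Lemma \ref{lem: extension} to conclude.

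The content here lies entirely in the three preceding lemmas, so I do not expect a genuine obstacle; the lemma is essentially bookkeeping. The only point that calls for any care is the multiplier stability of the spaces carrying the time norm on the inside, namely $L^{4}_{x}L^{\infty}_{t}$ and $L^{\infty}_{x}L^{2}_{t}$, but since $g$ is a function of $t$ alone this reduces to the trivial factoring described above.
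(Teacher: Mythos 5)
Your proposal is correct and follows exactly the paper's route: the paper derives this lemma directly from Lemmas \ref{lem: Schrodinger group estimates}, \ref{lem: Kawahara group estimates}, and the transfer principle of Lemma \ref{lem: extension}, with the choices of $Z$ and $s$ you describe. Your verification of the $L^{\infty}$-multiplier hypothesis for the mixed-norm spaces is a detail the paper leaves implicit, and it is handled correctly.
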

Next, we present some useful estimates obtained from the complex interpolation of the above estimates. For justification of such interpolations, see \cite{GP}. Note that since $\ell^{1}(\Z) \hookrightarrow \ell^{2}(\Z)$, we have
\begin{align} \label{eq: Plancherel}
    \|f\|_{L^{2}_{x,t}} \lesssim \|f\|_{X^{0,0,1}} \quad \text{and} \quad \|f\|_{L^{2}_{x,t}} \lesssim \|f\|_{Y^{0,0,1}}.
\end{align}
Interpolating (\ref{eq: Schrodinger strichartz}) for $(q,r)=(6,6)$ with (\ref{eq: Plancherel}), we obtain
\begin{align}  \label{eq: Schrodinger L3}
        \|f\|_{L^{3}_{t,x}} \lesssim \|f \|_{X^{0,\frac14,1}},
\end{align}
\begin{align}  \label{eq: Schrodinger L4}
        \|f\|_{L^{4}_{t,x}} \lesssim \|f \|_{X^{0,\frac38,1}}.
\end{align}
Similarly, interpolation between (\ref{eq: Kawahara L6 strichartz}) and (\ref{eq: Plancherel}) gives
\begin{align} \label{eq: Kawahara L3}
        \left \| P^N f \right \|_{L^{3}_{t,x}} \lesssim \|f \|_{Y^{-\frac14,\frac14,1}},
\end{align}
\begin{align} \label{eq: Kawahara L4}
        \left \| P^N f \right \|_{L^{4}_{t,x}} \lesssim \|f \|_{Y^{-\frac38,\frac38,1}}.
\end{align}
Interpolating (\ref{eq: Schrodinger strichartz}) for $(q,r)=(12,3)$ with (\ref{eq: Plancherel}), we have
\begin{align}  \label{eq: Schrodinger interpolation for cubic 1}
        \|f\|_{L^{\frac{24}{5}}_{t}L^{4}_{x}} \lesssim \|f \|_{X^{0,\frac13,1}}.
\end{align}
Interpolating (\ref{eq: Schrodinger strichartz}) for $(q,r)=(4,\infty)$ with (\ref{eq: Plancherel}), we have
\begin{align}  \label{eq: Schrodinger interpolation for cubic 2}
        \|f\|_{L^{\frac{8}{3}}_{t}L^{4}_{x}} \lesssim \|f \|_{X^{0,\frac14,1}}.
\end{align}

The proof of the following lemma is much similar to that of \cite[Lemma 2.2]{Wu}.
\begin{lem} \label{lemma: Wu}
    We have
    \begin{align} \label{eq: multiplier 1}
    \left | \iint_{\begin{subarray}{l}\\ \xi=\xi_{1}+\xi_{2}\\ \tau=\tau_{1}+\tau_{2} \end 
{subarray}} | 5\delta \xi^{4}+3\gamma \xi^{2} +2 \xi_1 |^{\frac{1}{2}} \hat{f}(\xi, \tau) 
\hat{g}(\xi_1, \tau_1)\hat{h}(-\xi_{2}, -\tau_{2})  \right | \lesssim \| f\|_{Y^{0,\frac12,1}} \|g\|_{X^{0,\frac12,1}} \|h\|_{X^{0,0,1}}.
\end{align}
\end{lem}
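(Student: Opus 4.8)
The plan is to dualize the trilinear form against $h$, reduce matters to a single bilinear $L^2$-estimate for the Kawahara--Schr\"odinger interaction, and then run a Cauchy--Schwarz and counting argument on the Fourier side in which the weight $|5\delta\xi^4+3\gamma\xi^2+2\xi_1|^{\frac12}$ surfaces exactly as the square root of a Jacobian. Concretely, let $B=B(f,g)$ be the bilinear operator defined on the Fourier side by
\[
\hat B(\xi_2,\tau_2)=\int |5\delta(\xi_1+\xi_2)^4+3\gamma(\xi_1+\xi_2)^2+2\xi_1|^{\frac12}\,\hat f(\xi_1+\xi_2,\tau_1+\tau_2)\,\hat g(\xi_1,\tau_1)\,d\xi_1\,d\tau_1.
\]
Writing the left-hand side of (\ref{eq: multiplier 1}) as the space-time pairing $\int_{\R\times\R} B\,h\,dx\,dt$ and applying Cauchy--Schwarz together with $\|h\|_{L^2_{x,t}}\lesssim\|h\|_{X^{0,0,1}}$ from (\ref{eq: Plancherel}), it suffices to prove the bilinear estimate $\|B\|_{L^2_{x,t}}\lesssim\|f\|_{Y^{0,\frac12,1}}\|g\|_{X^{0,\frac12,1}}$.

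To prove this I would exploit the $\ell^1$-Besov structure in modulation. Decomposing $\hat f=\sum_j\hat f_j$ into pieces supported where $\langle\tau-p(\xi)\rangle\sim 2^j$ and $\hat g=\sum_k\hat g_k$ into pieces supported where $\langle\tau_1+\xi_1^2\rangle\sim 2^k$, bilinearity and the triangle inequality reduce everything to the single-scale bound $\|B(f_j,g_k)\|_{L^2_{x,t}}\lesssim 2^{j/2}2^{k/2}\|f_j\|_{L^2}\|g_k\|_{L^2}$, because summing this in $j$ and $k$ reconstitutes the two $\ell^1$ norms with no loss. For the single-scale bound I would pass to the Fourier side and apply Cauchy--Schwarz in $(\xi_1,\tau_1)$, which yields
\[
\|B(f_j,g_k)\|_{L^2_{x,t}}^2 \le \Big(\sup_{\xi_2,\tau_2}\int_{A}|5\delta\xi^4+3\gamma\xi^2+2\xi_1|\,d\xi_1\,d\tau_1\Big)\,\|f_j\|_{L^2}^2\,\|g_k\|_{L^2}^2,
\]
where $A=A(\xi_2,\tau_2)$ is the region on which both modulation localizations are satisfied and $\xi=\xi_1+\xi_2$.

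The crux is bounding this supremum, and this is where the precise form of the weight is essential. For fixed $\xi_2,\tau_2$, the intersection of the two modulation intervals confines $\tau_1$ to a set of measure $\lesssim\min(2^j,2^k)$ for each $\xi_1$, and forces $\xi_1$ into $S=\{\xi_1:|\Psi(\xi_1)-\tau_2|\lesssim 2^{\max(j,k)}\}$ with $\Psi(\xi_1):=\xi_1^2+p(\xi_1+\xi_2)$. The key observation is the identity $\Psi'(\xi_1)=2\xi_1+p'(\xi_1+\xi_2)=5\delta\xi^4+3\gamma\xi^2+2\xi_1$, so that the squared weight is exactly $|\Psi'|$. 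Since $\Psi'$ is a polynomial of degree at most four in $\xi_1$, the set $S$ decomposes into a bounded number of intervals of monotonicity of $\Psi$, and on each the change of variables $\eta=\Psi(\xi_1)$ gives $\int|\Psi'(\xi_1)|\,d\xi_1=|\Psi(S_i)|\lesssim 2^{\max(j,k)}$. Therefore $\int_A|5\delta\xi^4+3\gamma\xi^2+2\xi_1|\,d\xi_1\,d\tau_1\lesssim\min(2^j,2^k)\cdot 2^{\max(j,k)}=2^{j+k}$, which is exactly the single-scale bound. I expect the main obstacle to be recognizing this Jacobian structure: once $\Psi'$ is identified with the weight, the telescoping of $\int_S|\Psi'|$ into the length of a single modulation interval is what makes the estimate sharp and loss-free, and it is precisely this cancellation that a generic weight would destroy.
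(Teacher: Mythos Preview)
Your argument is correct and is essentially the same proof as the paper's: both decompose $f$ and $g$ into modulation-dyadic pieces $f_j,g_k$, identify the weight $5\delta\xi^4+3\gamma\xi^2+2\xi_1$ as the Jacobian $\partial\omega/\partial\xi_1$ of the resonance function $\omega=p(\xi)+\xi_1^2$, and use this to cancel the multiplier against the measure of the modulation slab, producing the loss-free factor $2^{j/2}2^{k/2}$. The only difference is packaging---the paper performs the change of variables $(\xi,\xi_1)\mapsto(\xi-\xi_1,\,p(\xi)+\xi_1^2)$ explicitly and then applies Cauchy--Schwarz, whereas you apply Cauchy--Schwarz first and then bound $\int_S|\Psi'|\,d\xi_1$ by the level-set/area-formula argument; your version has the minor advantage of making the handling of the finitely many intervals of monotonicity of $\Psi$ explicit.
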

\begin{proof}
Let 
\begin{align*}
    I(f,g,h)=\iint_{\begin{subarray}{l}\\ \xi=\xi_{1}+\xi_{2}\\ \tau=\tau_{1}+\tau_{2} \end 
{subarray}} | 5\delta \xi^{4}+3\gamma \xi^{2} +2 \xi_1 |^{\frac{1}{2}} \hat{f}(\xi, \tau)
\hat{g}(\xi_1, \tau_1)\hat{h}(-\xi_{2}, -\tau_{2}).
\end{align*}
Let $\hat{f_{j}}=\varphi_{j}(\tau-p(\xi))\hat{f}(\xi, \tau)$ and $\hat{g_{k}}=\varphi_{k}(\tau+\xi^2)\hat{g}(\xi, \tau)$. Setting $\tau=\lambda+p(\xi)$, $\tau_{1}=\lambda_{1}-\xi_{1}^2$, and $\tau_{2}=\lambda_{2}+\xi_{2}^2$, we have 
\begin{multline*}
    I(f_j,g_k,h)=\iint_{\begin{subarray}{l}\\ \xi=\xi_{1}+\xi_{2}\\ \tau=\tau_{1}+\tau_{2} \end 
{subarray}}  |5\delta \xi^{4}+3\gamma \xi^{2} +2 \xi_1|^{\frac{1}{2}} \hat{f_j}(\xi, \lambda+p(\xi))
\hat{g_k}(\xi_1, \lambda_{1}-\xi_{1}^2) \\
\times \hat{h}(\xi_{1}-\xi,\lambda_{1}-\lambda-\xi_{1}^2-p(\xi))d\xi d\xi_1 d\lambda d\lambda_1.
\end{multline*}
We make the change of variables $(\eta,\omega)=T(\xi,\xi_1)$, where $\eta=\xi-\xi_{1}$, $\omega=\lambda-\lambda_{1}+p(\xi)+\xi_{1}^2$. Let $J$ denote the Jacobian of this change of variables. Then
\begin{align*}
    |J|=|5\delta \xi^{4}+3\gamma \xi^{2} +2 \xi_1|.
\end{align*}
Let $H(\eta,\omega,\lambda,\lambda_1)=(\hat{f_j}\hat{g_k}) \circ T^{-1}(\eta,\omega,\lambda,\lambda_1)$. Then
\begin{align*}
        |I(f_j,g_k,h)| 
        & \leq \int \frac{|H(\eta,\omega,\lambda,\lambda_1)|}{|J|^{\frac12}}|\hat{h}(-\eta,-\omega)|d\eta d\omega d\lambda d\lambda_1 \\
        & \leq \int \left ( \int  \frac{|H(\eta,\omega,\lambda,\lambda_1)|^{2}}{|J|}d\eta d\omega \right )^{\frac12}d\lambda d\lambda_1 \|h\|_{L^2} \\
        &= \int \| \varphi_{j}(\lambda)\hat{f}(\xi,\lambda+p(\xi)) \|_{L^{2}_{\xi}}d\lambda \int \| \varphi_{k}(\lambda_1)\hat{g}(\xi_1,\lambda_1-\xi_1) \|_{L^{2}_{\xi_1}}d\lambda_1 \|h\|_{L^2}.
\end{align*}
Since by the Cauchy-Schwartz inequality
\begin{align*} 
    \int_{\R} \| \varphi_{j}(\lambda)\hat{f}(\xi,\lambda+p(\xi)) \|_{L^{2}_{\xi}}d\lambda 
    &= \int_{|\lambda| \sim 2^j} \| \varphi_{j}(\lambda)\hat{f}(\xi,\lambda+p(\xi)) \|_{L^{2}_{\xi}}d\lambda\\
    &\lesssim 2^{\frac{j}{2}}\| \varphi_{j}(\tau-p(\xi))\hat{f}(\xi,\tau) \|_{L^{2}_{\xi,\tau}},
\end{align*}
and similarly 
\begin{align*}
    \int \| \varphi_{k}(\lambda_1)\hat{g}(\xi_1,\lambda_1-\xi_1) \|_{L^{2}_{\xi_1}}d\lambda_1 \lesssim 2^{\frac{k}{2}}\| \varphi_{j}(\tau+\xi^2)\hat{f}(\xi,\tau) \|_{L^{2}_{\xi,\tau}},
\end{align*}
we have
\begin{align*}
    |I(f,g,h)| \leq \sum_{j,k \geq 0} |I(f_j,g_k,h)| \lesssim \|f\|_{Y^{0,\frac12,1}}\|g\|_{X^{0,\frac12,1}}\|h\|_{L^2}.
\end{align*}
\end{proof}

\begin{lem} \label{lem: key estimate}
We have
    \begin{align*}
        \left | \iint_{\begin{subarray}{l}\\ \xi=\xi_{1}+\xi_{2}\\ \tau=\tau_{1}+\tau_{2} \end 
        {subarray}} | 5\delta \xi^{4}+3\gamma \xi^{2}+2 \xi_1 |^{\frac{1}{4}}\hat{f}(\xi, \tau)\hat{g}(\xi_1, \tau_1)\hat{h}(-\xi_2,-\tau_2)  \right | \lesssim \| f\|_{Y^{0,\frac14,1}} \|g\|_{X^{0,\frac38,1}} \|h\|_{X^{0,\frac14,1}}.
    \end{align*}
\end{lem}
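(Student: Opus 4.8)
The plan is to deduce Lemma \ref{lem: key estimate} from Lemma \ref{lemma: Wu} by complex interpolation against an elementary estimate in which the weight is removed. Write $m(\xi,\xi_1)=|5\delta\xi^{4}+3\gamma\xi^{2}+2\xi_1|$ for the multiplier common to both statements, and let
\[
T(f,g,h)=\iint_{\begin{subarray}{l}\\ \xi=\xi_{1}+\xi_{2}\\ \tau=\tau_{1}+\tau_{2} \end{subarray}}\hat{f}(\xi,\tau)\hat{g}(\xi_1,\tau_1)\hat{h}(-\xi_{2},-\tau_{2})
\]
denote the trilinear form carrying no weight. The two endpoints will be Lemma \ref{lemma: Wu}, which bounds the form with weight $m^{1/2}$ by $\|f\|_{Y^{0,\frac12,1}}\|g\|_{X^{0,\frac12,1}}\|h\|_{X^{0,0,1}}$, and the weightless bound described next. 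Note that the multiplier power and the regularity indices in Lemma \ref{lem: key estimate} are exactly the $\theta=\tfrac12$ geometric averages of those in these two endpoints, which is what makes interpolation the natural route; indeed, the direct change-of-variables argument used for Lemma \ref{lemma: Wu} does not close here, since the Jacobian $|J|=m$ matches the weight $m^{1/2}$ there but overshoots the weight $m^{1/4}$.

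First I would establish the base estimate
\[
|T(f,g,h)|\lesssim\|f\|_{Y^{0,0,1}}\|g\|_{X^{0,\frac14,1}}\|h\|_{X^{0,\frac12,1}}.
\]
By Parseval, $T(f,g,h)$ equals a constant multiple of a space-time integral of the product of (reflections of) $f$, $g$, and $h$; applying the $L^{2}L^{3}L^{6}$-H\"older inequality, valid since $\tfrac12+\tfrac13+\tfrac16=1$, together with the reflection invariance of the $L^{p}_{t,x}$ norms, reduces matters to bounding $\|f\|_{L^{2}_{x,t}}\|g\|_{L^{3}_{t,x}}\|h\|_{L^{6}_{t,x}}$. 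The three factors are then controlled by \eqref{eq: Plancherel}, \eqref{eq: Schrodinger L3}, and \eqref{eq: Schrodinger strichartz} with $(q,r)=(6,6)$, respectively, which gives the claim.

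With both endpoints in hand, I would introduce the analytic family
\[
T_{z}(f,g,h)=\iint_{\begin{subarray}{l}\\ \xi=\xi_{1}+\xi_{2}\\ \tau=\tau_{1}+\tau_{2} \end{subarray}} m^{z/2}\,\hat{f}(\xi,\tau)\hat{g}(\xi_1,\tau_1)\hat{h}(-\xi_{2},-\tau_{2})
\]
on the strip $0\le\Re z\le 1$, setting $m^{z/2}=0$ on the negligible set $\{m=0\}$. Since $|m^{z/2}|=m^{\Re z/2}$, the base estimate controls $T_{z}$ on $\Re z=0$ while Lemma \ref{lemma: Wu} controls it on $\Re z=1$. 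Stein's interpolation theorem at $\theta=\tfrac12$ then bounds $T_{1/2}(f,g,h)=\iint m^{1/4}\hat f\hat g\hat h$, with the three norms taken in the complex interpolation spaces of the endpoint scales. Using that the $\ell^{1}$-based spaces interpolate in the index $b$ by geometric averaging of the dyadic weights (as justified in \cite{GP}), namely $[Y^{0,0,1},Y^{0,\frac12,1}]_{\frac12}=Y^{0,\frac14,1}$, $[X^{0,\frac14,1},X^{0,\frac12,1}]_{\frac12}=X^{0,\frac38,1}$, and $[X^{0,\frac12,1},X^{0,0,1}]_{\frac12}=X^{0,\frac14,1}$, yields precisely the asserted inequality.

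The main obstacle is not any hard harmonic analysis but the justification of the interpolation itself. One must verify that $z\mapsto T_{z}$ is an admissible analytic family, which follows from the Schwartz decay of $\hat f,\hat g,\hat h$ together with the at-most-polynomial growth of $m$ and of $\log m$ on the strip, and that the Besov-type scales $X^{s,b,1}$ and $Y^{s,b,1}$ genuinely form complex interpolation scales in $b$. Both points are handled exactly as in \cite{GP}, which the paper already invokes for analogous interpolations.
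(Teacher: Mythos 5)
Your proposal is correct and follows essentially the same route as the paper: the paper first proves the same weightless endpoint $|T(f,g,h)|\lesssim \|f\|_{Y^{0,0,1}}\|g\|_{X^{0,\frac14,1}}\|h\|_{X^{0,\frac12,1}}$ via the identical $L^{2}L^{3}L^{6}$-H\"older argument with \eqref{eq: Plancherel}, \eqref{eq: Schrodinger L3}, and \eqref{eq: Schrodinger strichartz} at $(q,r)=(6,6)$, and then interpolates it against Lemma \ref{lemma: Wu} (citing \cite{GM} for the analytic-family interpolation of multilinear operators) before setting $\theta=\tfrac12$. Your endpoint identification, geometric averaging of the indices, and the points needing justification all match the paper's argument.
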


\begin{proof}
By (\ref{eq: Schrodinger strichartz}) for $(q,r)=(6,6)$, (\ref{eq: Schrodinger L3}), and the $L^2L^3L^6$-H\"older inequality, we have\begin{align} \label{eq: multiplier 2}
    \left | \iint_{\begin{subarray}{l}\\ \xi=\xi_{1}+\xi_{2}\\ \tau=\tau_{1}+\tau_{2} \end 
{subarray}} \hat{f}(\xi, \tau)
\hat{g}(\xi_1, \tau_1)\hat{h}(-\xi_2,-\tau_2)  \right | \lesssim \| f\|_{Y^{0,0,1}} \|g\|_{X^{0,\frac14,1}} \|h\|_{X^{0,\frac12,1}}.
\end{align}
Interpolating (\ref{eq: multiplier 1}) and (\ref{eq: multiplier 2}), we have
    \begin{multline} \label{eq: Wu's estimate}
        \left | \iint_{\begin{subarray}{l}\\ \xi=\xi_{1}+\xi_{2}\\ \tau=\tau_{1}+\tau_{2} \end 
        {subarray}} |5\delta \xi^{4}+3\gamma \xi^{2}+2 \xi_1 |^{\frac{\theta}{2}}\hat{f}(\xi, \tau)\hat{g}(\xi_1, \tau_1)\hat{h}(-\xi_2,-\tau_2)\right | \\ \lesssim \| f\|_{Y^{0,\frac{\theta}{2},1}} \|g\|_{X^{0,\frac{1-\theta}{4}+\frac{\theta}{2},1}} \|h\|_{X^{0,\frac{1-\theta}{2},1}}, \quad 0<\theta<1,
    \end{multline}
see \cite{GM}. By taking $\theta=\frac12$, we obtain the desired bound.
\end{proof}

\begin{lem} \label{lem: key estimate 2}
Let $N>0$ be as in Lemma \ref{lem: Kawahara group estimates}. We have
\begin{align*} 
 \iint_{\begin{subarray}{l}\\ \xi=\xi_{1}+\xi_{2}\\ \tau=\tau_{1}+\tau_{2} \end 
        {subarray}}  \chi_{\{|\xi|\geq N\}}|\hat{f}(\xi,\tau)||\hat{g}(\xi_1,\tau_1)||\hat{h}(-\xi_{2},-\tau_{2})| \lesssim \| f\|_{Y^{-\frac{7}{16},\frac14,1}} \|g\|_{X^{\frac{1}{16},\frac18,1}} \|h\|_{X^{0,\frac38,1}}.
\end{align*}
\end{lem}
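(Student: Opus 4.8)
The plan is to pass to the physical side and close the estimate by a mixed-norm Hölder inequality, feeding each factor into a complex-interpolated, frequency-localized linear estimate drawn from the bounds (\ref{eq: Schrodinger strichartz})--(\ref{eq: Kawahara maximal}). First I would replace $f,g,h$ by the nonnegative functions $u,v,w$ with $\hat u=|\hat f|$, $\hat v=|\hat g|$, $\hat w=|\hat h|$, which leaves every Fourier restriction norm unchanged. By Plancherel the left-hand side is then bounded by a constant multiple of $\int_{\R^2}(P^N u)\,v\,\tilde w\,dx\,dt$, where $\tilde w(x,t)=w(-x,-t)$ (the reflection produced by the arguments $-\xi_2,-\tau_2$) has the same space-time Lebesgue norms as $w$. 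The cutoff $\chi_{\{|\xi|\ge N\}}$ is precisely the assertion that only $P^N u$ enters, so the Kawahara estimates (\ref{eq: Kawahara L6 strichartz})--(\ref{eq: Kawahara maximal}), which require the projection $P^N$, are available for the $f$-factor, while $g,h$ are Schrödinger factors at unrestricted frequency and use the unprojected estimates.

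I would then apply Hölder with the triple $L^{16/5}_xL^{8/3}_t\times L^{16/7}_xL^{8/3}_t\times L^{4}_{t,x}$, placing $P^N u$, $v$, $\tilde w$ into these spaces respectively; the exponents satisfy $\tfrac{5}{16}+\tfrac{7}{16}+\tfrac14=1$ in $x$ and $\tfrac38+\tfrac38+\tfrac14=1$ in $t$. The three single-function bounds needed are produced by interpolation. The factor $\tilde w$ is handled by $\|\tilde w\|_{L^{4}_{t,x}}\lesssim\|h\|_{X^{0,3/8,1}}$, which is exactly (\ref{eq: Schrodinger L4}). The factor $v$ uses $\|v\|_{L^{16/7}_xL^{8/3}_t}\lesssim\|g\|_{X^{1/16,1/8,1}}$, obtained by interpolating the trivial bound (\ref{eq: Plancherel}) with the maximal function estimate (\ref{eq: Sch maximal}) at parameter $\theta=\tfrac14$; this is what generates the \emph{positive} regularity $\tfrac1{16}$ on $g$. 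The factor $P^N u$ uses $\|P^N u\|_{L^{16/5}_xL^{8/3}_t}\lesssim\|f\|_{Y^{-7/16,1/4,1}}$, obtained in two steps: first interpolate (\ref{eq: Plancherel}) with the Kato smoothing estimate (\ref{eq: Kawahara Kato}) at $\theta=\tfrac12$ to get $\|P^N u\|_{L^{4}_xL^{2}_t}\lesssim\|f\|_{Y^{-1,1/4,1}}$, then interpolate this with the Kawahara Strichartz bound (\ref{eq: Kawahara L3}) (weight $\tfrac14$ on the $L^4_xL^2_t$ piece) to land in $L^{16/5}_xL^{8/3}_t$ with the weight $Y^{-7/16,1/4,1}$. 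The admissibility of these complex interpolations for the $\ell^1$-Besov restriction spaces is justified as in \cite{GP, GM}.

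The main point to watch, and the step I expect to be the real obstacle, is that all six Hölder exponents (three in $x$ and three in $t$) must close \emph{simultaneously} with the three target index pairs $(-\tfrac{7}{16},\tfrac14)$, $(\tfrac1{16},\tfrac18)$, $(0,\tfrac38)$. This two-parameter matching rigidly fixes the role of each factor: the large negative regularity $-\tfrac7{16}$ on the Kawahara factor can only be gained from the strong smoothing of the Kato estimate, while the positive regularity $\tfrac1{16}$ on a Schrödinger factor must come from the maximal function estimate. Using the full strength of both at once, i.e. pairing $L^\infty_xL^2_t$ against $L^4_xL^\infty_t$, would leave the remaining factor in $L^{4/3}_x$, outside the admissible Strichartz range; this is exactly why one is forced onto the fractional mixed norms above, and why the two distinct interpolation steps for the $f$-factor (Kato, then $L^3$) are needed rather than a single endpoint estimate.
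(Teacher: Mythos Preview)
Your proof is correct and follows essentially the same approach as the paper: the same H\"older triple $L^{16/5}_xL^{8/3}_t\times L^{16/7}_xL^{8/3}_t\times L^{4}_{t,x}$, the same estimate (\ref{eq: Schrodinger L4}) for $h$, and the same interpolation of (\ref{eq: Sch maximal}) with (\ref{eq: Plancherel}) for $g$. The only cosmetic difference is in reaching $\|P^N u\|_{L^{16/5}_xL^{8/3}_t}\lesssim\|f\|_{Y^{-7/16,1/4,1}}$: the paper interpolates (\ref{eq: Kawahara Kato}) with (\ref{eq: Plancherel}) to get $L^{5/2}_xL^2_t$ and then interpolates with the $L^6$ Strichartz bound (\ref{eq: Kawahara L6 strichartz}), whereas you pass through $L^4_xL^2_t$ and then use (\ref{eq: Kawahara L3}); since (\ref{eq: Kawahara L3}) is itself obtained from (\ref{eq: Kawahara L6 strichartz}) and (\ref{eq: Plancherel}), the two routes traverse the same interpolation triangle and are equivalent.
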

\begin{proof}
Interpolating (\ref{eq: Sch maximal}) with (\ref{eq: Plancherel}), we have
\begin{align} \label{eq: Sch interpolation 4}
     \| f \|_{L^{\frac{16}{7}}_{x}L^{\frac83}_{t}} \lesssim \|f\|_{X^{\frac{1}{16},\frac18,1}}.
\end{align} 
Interpolating (\ref{eq: Kawahara Kato}) with (\ref{eq: Plancherel}), we have
\begin{align} \label{eq: Kawa interpolation 1}
     \left \| P^N f \right \|_{L^{\frac52}_{x}L^{2}_{t}} \lesssim \|f\|_{Y^{-\frac25,\frac{1}{10},1}}.
\end{align}
Interpolating (\ref{eq: Kawahara L6 strichartz}) with (\ref{eq: Kawa interpolation 1}), we obtain
\begin{align} \label{eq: Kawa interpolation 3}
     \left \| P^N f \right \|_{L^{\frac{16}{5}}_{x}L^{\frac{8}{3}}_{t}} \lesssim \|f\|_{X^{-\frac{7}{16},\frac14,1}}.
\end{align}
Using (\ref{eq: Schrodinger L4}), (\ref{eq: Sch interpolation 4}) and (\ref{eq: Kawa interpolation 3}), the desired estimate follows from the H\"older inequality.
\end{proof}
Finally we record a calculus lemma which is often used below.
\begin{lem} \cite[Lemma 3.3]{ET} \label{CalcLem} 
Let 
\begin{align*}
\phi_{\alpha}(a):=
  \begin{cases}
  1 & \alpha>1 \\
  \log (1+\lb a \rb) & \alpha =1 \\
  \lb a \rb^{1-\alpha} & \alpha <1.
  \end{cases}
\end{align*}
For $\alpha \geq \beta \geq 0$ such that $\alpha+\beta>1$, we have
\begin{align*}
  \int\frac{dx}{\lb x-a_1 \rb^{\alpha}\lb x-a_2 \rb^{\beta}} \lesssim \lb a_1-a_2 \rb^{-\beta}\phi_{\alpha}(a_1-a_2),
\end{align*}
\end{lem}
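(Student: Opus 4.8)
The plan is to reduce to a single parameter by translation invariance and then dissect the real line according to the relative size of the integration variable and $a:=a_1-a_2$. Translating $x\mapsto x+a_2$ turns the claim into
\begin{align*}
\int_{\R}\frac{dx}{\lb x-a\rb^{\alpha}\lb x\rb^{\beta}}\lesssim \lb a\rb^{-\beta}\phi_{\alpha}(a).
\end{align*}
When $|a|\lesssim 1$ the two weights are comparable at unit scale, so the integral is $\sim\int_{\R}\lb x\rb^{-\alpha-\beta}\,dx$, which is finite since $\alpha+\beta>1$; this matches the right-hand side because $\lb a\rb^{-\beta}\phi_{\alpha}(a)\sim 1$. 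Hence I may assume $|a|\gg 1$.

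For $|a|\gg 1$ I would split $\R$ into three pieces: (i) $\{|x-a|\le |a|/2\}$, (ii) $\{|x|\le |a|/2\}$, and (iii) the remaining region where $|x|\gtrsim|a|$ and $|x-a|\gtrsim|a|$ simultaneously. On (i) we have $\lb x\rb\sim\lb a\rb$, so that piece contributes
\begin{align*}
\lesssim \lb a\rb^{-\beta}\int_{|x-a|\le|a|/2}\lb x-a\rb^{-\alpha}\,dx,
\end{align*}
and the surviving integral produces precisely the trichotomy defining $\phi_{\alpha}$, yielding the main term $\lb a\rb^{-\beta}\phi_{\alpha}(a)$. On (ii) we instead have $\lb x-a\rb\sim\lb a\rb$, so the contribution is $\lesssim \lb a\rb^{-\alpha}\phi_{\beta}(a)$, while on (iii) both weights are $\sim\lb x\rb$ and integrability from $\alpha+\beta>1$ gives $\lesssim\lb a\rb^{1-\alpha-\beta}$.

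It then remains to verify that the contributions of (ii) and (iii) are dominated by the main term of (i); this is exactly the point where the hypothesis $\alpha\ge\beta$ enters. Splitting further according to whether each of $\alpha,\beta$ is $>1$, $=1$, or $<1$, a short computation confirms $\lb a\rb^{-\alpha}\phi_{\beta}(a)\lesssim\lb a\rb^{-\beta}\phi_{\alpha}(a)$ and $\lb a\rb^{1-\alpha-\beta}\lesssim\lb a\rb^{-\beta}\phi_{\alpha}(a)$ in every subcase. I expect this case bookkeeping to be the only real obstacle, and it is entirely elementary: no cancellation or oscillation is exploited, only the triangle inequality and one-dimensional power integration.
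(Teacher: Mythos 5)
Your argument is correct: the reduction to $a=a_1-a_2$, the three-region decomposition at scale $|a|/2$, and the final comparisons $\lb a\rb^{-\alpha}\phi_{\beta}(a)\lesssim\lb a\rb^{-\beta}\phi_{\alpha}(a)$ (using $\alpha\ge\beta$) and $\lb a\rb^{1-\alpha-\beta}\lesssim\lb a\rb^{-\beta}\phi_{\alpha}(a)$ all check out. The paper itself gives no proof of this lemma — it is quoted from \cite{ET} — so there is nothing internal to compare against; your regional decomposition is the standard argument used for such convolution estimates and matches the spirit of the cited source.
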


\section{Local theory} \label{sec: LWP}

\subsection{Linear and bilinear estimates}
We follow the analysis in \cite{Kato} for this section. Let $p_{\lambda}(\xi)=\gamma \lambda^2 \xi^3+\delta \xi^5$. We denote by $S_{\lambda}(t)=e^{it \lambda^3 \partial_{x}^{2}}$ and $W_{\lambda}(t)=e^{it p_{\lambda}(\partial_{x}/i)}$. Let
\begin{align*}
    \|u\|_{X^{s,b}_{\lambda}}:=\left\|\lb \xi \rb^{s} \lb \tau +\lambda^3 \xi^2 \rb^b \hat{u}(\xi, \tau) \right\|_{L^2_{\xi,\tau}}.
\end{align*}
For
\begin{align*}
     & P_1:=\{ (\xi,\tau) \in \R^2 : |\tau-p_{\lambda}(\xi)| \leq \frac{31}{32}\delta |\xi|^5+\frac{7}{8} \lambda^2 \gamma |\xi|^3 \}, \\
     & P_2:=\R^2\setminus P_1,
\end{align*}
let
\begin{align*}
    w_{k,\lambda,\theta}(\xi,\tau):= \lb \xi \rb^{k} \lb \tau-p_{\lambda}(\xi)\rb^{\frac12+2\theta} \chi_{P_1}(\xi,\tau)+\lb \xi \rb^{k+1+15 \theta}\lb \tau-p_{\lambda}(\xi)\rb^{\frac{3}{10}-\theta} \chi_{P_2}(\xi,\tau).
\end{align*}
We define the space $Y^{k}_{\lambda,\theta}$ by the norm
\begin{align*}
    \|u\|_{Y^{k}_{\lambda,\theta}}:=\|w_{k,\lambda,\theta}(\xi,\tau) \hat{u}(\xi,\tau) \|_{L^2_{\xi,\tau}}+\|u\|_{L^{\infty}_t H^k_x}.
\end{align*}
Note that since $w_{k,\lambda,\theta}(\xi,\tau) \lesssim \lb \xi \rb^{k} \lb \tau-p_{\lambda}(\xi)\rb^{\frac12+2\theta}$ and $Y^{k}_{\lambda,\theta} \hookrightarrow C_t H^k_x$, we have
\begin{align} \label{eq: Y embedding}
    \|u\|_{Y^{k}_{\lambda,\theta}} \lesssim \|u\|_{Y^{k,\frac12+2\theta}_{\lambda}}.
\end{align}
Let
\begin{align*}
    \| u \|_{Z^{k}_{\lambda,\theta}}:=\left\| \lb \tau-p_{\lambda}(\xi) \rb^{-1} w_{s,\lambda,\theta}(\xi,\tau) \hat{u}(\xi,\tau) \right \|_{L^2_{\xi,\tau}} + \left\| \lb \tau-p_{\lambda}(\xi) \rb^{-1} \right \lb \xi \rb^{s} \hat{u}\|_{L^2_{\xi}L^1_{\tau}}.
\end{align*}
\begin{lem}\cite[Proposition 2.5]{Kato}
    For $0<\lambda \leq 1$, we have
\begin{align*}
    &\left \| \psi(t) S_{\lambda}(t) g\right\|_{X^{s,b}_{\lambda}} \lesssim \|g \|_{H^s}, \\
    &\left \| \psi(t) W_{\lambda}(t) g\right\|_{Y^{k}_{\lambda,\theta}} \lesssim \|g \|_{H^s}.
\end{align*}
The implicit constants do not depend on $\lambda$.
\end{lem}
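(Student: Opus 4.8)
The plan is to reduce both estimates to elementary properties of the fixed Schwartz cutoff $\psi$ by computing the space-time Fourier transforms explicitly. If $U(t)$ is a unitary group with real Fourier symbol $m$, i.e.\ $\widehat{U(t)g}(\xi)=e^{itm(\xi)}\hat g(\xi)$, then $\psi(t)U(t)g$ has space-time Fourier transform $\hat g(\xi)\,\hat\psi(\tau-m(\xi))$, where $\hat\psi(\tau)=\int\psi(t)e^{-it\tau}\,dt$. Since $\psi$ is Schwartz, so is $\hat\psi$, and every weighted integral arising below will be a convergent integral of $\hat\psi$ whose value is independent of $\lambda$. This is precisely what produces the uniformity in $\lambda$.

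For the Schr\"odinger estimate, $S_\lambda(t)=e^{it\lambda^3\p_x^2}$ has symbol $m(\xi)=-\lambda^3\xi^2$, so $\psi(t)S_\lambda(t)g$ has space-time Fourier transform $\hat g(\xi)\hat\psi(\tau+\lambda^3\xi^2)$. The weight defining $X^{s,b}_\lambda$ is $\lb\tau+\lambda^3\xi^2\rb^b$, which matches this argument exactly. Substituting and changing variables $\tau\mapsto\mu=\tau+\lambda^3\xi^2$ at fixed $\xi$ gives
\begin{align*}
\|\psi(t)S_\lambda(t)g\|_{X^{s,b}_\lambda}^2=\int\lb\xi\rb^{2s}|\hat g(\xi)|^2\,d\xi\int\lb\mu\rb^{2b}|\hat\psi(\mu)|^2\,d\mu=\|g\|_{H^s}^2\,\|\psi\|_{H^b}^2,
\end{align*}
and $\|\psi\|_{H^b}$ is a finite constant independent of $\lambda$.

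For the Kawahara estimate, $\psi(t)W_\lambda(t)g$ has space-time Fourier transform $\hat g(\xi)\hat\psi(\sigma)$ with $\sigma=\tau-p_\lambda(\xi)$. The $L^\infty_tH^k_x$ part of the $Y^k_{\lambda,\theta}$ norm is immediate: $W_\lambda(t)$ is unitary on $H^k$ because $p_\lambda$ is real, so $\|\psi(t)W_\lambda(t)g\|_{L^\infty_tH^k_x}\le\|\psi\|_{L^\infty}\|g\|_{H^k}$. For the weighted $L^2$ part I change variables $\tau\mapsto\sigma$ at fixed $\xi$ and split the $\sigma$-integral according to $P_1=\{|\sigma|\le R(\xi)\}$ and $P_2=\{|\sigma|>R(\xi)\}$, where $R(\xi)=\tfrac{31}{32}\delta|\xi|^5+\tfrac78\lambda^2\gamma|\xi|^3$. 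On $P_1$, where $w_{k,\lambda,\theta}$ reduces to $\lb\xi\rb^{k}\lb\sigma\rb^{\frac12+2\theta}$, the contribution is
\begin{align*}
\lb\xi\rb^{2k}\int_{|\sigma|\le R(\xi)}\lb\sigma\rb^{1+4\theta}|\hat\psi(\sigma)|^2\,d\sigma\lesssim\lb\xi\rb^{2k}\int_\R\lb\sigma\rb^{1+4\theta}|\hat\psi(\sigma)|^2\,d\sigma\lesssim\lb\xi\rb^{2k},
\end{align*}
since $\hat\psi$ is Schwartz.

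The only genuinely delicate point is the $P_2$ contribution, where the spatial weight is raised from $\lb\xi\rb^{k}$ to $\lb\xi\rb^{k+1+15\theta}$. Here I would exploit that the defining inequality of $P_2$ forces a large modulation: for $|\xi|$ large one has $R(\xi)\gtrsim\lb\xi\rb^5$ uniformly in $\lambda\in(0,1]$, the lower-order term $\tfrac78\lambda^2\gamma|\xi|^3$ being controlled precisely because $\lambda\le1$ (this is where the rescaling hypothesis enters). Using the rapid decay $|\hat\psi(\sigma)|\lesssim_M\lb\sigma\rb^{-M}$ for $M$ as large as needed, the $P_2$ contribution is bounded by
\begin{align*}
\lb\xi\rb^{2(k+1+15\theta)}\int_{|\sigma|>R(\xi)}\lb\sigma\rb^{\frac35-2\theta-2M}\,d\sigma\lesssim_M\lb\xi\rb^{2(k+1+15\theta)}\lb R(\xi)\rb^{\frac85-2\theta-2M}\lesssim_M\lb\xi\rb^{2k},
\end{align*}
once $M$ is chosen large enough that $\lb\xi\rb^{5(\frac85-2\theta-2M)}$ overwhelms the polynomial prefactor $\lb\xi\rb^{2(k+1+15\theta)}$; for bounded $\xi$ the same integral is trivially $\lesssim\lb\xi\rb^{2k}$. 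Summing the two regions and integrating in $\xi$ yields $\lesssim\|g\|_{H^k}^2$ with constant independent of $\lambda$. I expect the bookkeeping in this $P_2$ estimate---in particular verifying the uniform lower bound $R(\xi)\gtrsim\lb\xi\rb^5$ and checking that the chosen $M$ defeats the extra factor $\lb\xi\rb^{1+15\theta}$---to be the main technical step, the remainder being routine.
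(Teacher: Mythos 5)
Your argument is correct and is the standard direct computation (writing the space--time Fourier transform of $\psi(t)U(t)g$ as $\hat g(\xi)\hat\psi(\tau-m(\xi))$, factoring the $\xi$- and modulation-integrals, and using the rapid decay of $\hat\psi$ to absorb the extra $\lb\xi\rb^{1+15\theta}$ on $P_2$ via the lower bound $R(\xi)\gtrsim\lb\xi\rb^{5}$ for large $|\xi|$, uniformly in $\lambda\le1$). The paper does not reprove this lemma but cites it from Kato, where it is established by essentially the same computation, so there is nothing further to compare.
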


\begin{lem} \cite[Proposition 2.6]{Kato}
For $0<\lambda \leq 1$, we have
\begin{align*}
   \left \| \psi(t) \int_{0}^{t} W_{\lambda}(t-t')F(t')dt' \right \|_{Y^{k}_{\lambda,\theta}} \lesssim \|F\|_{Z^{k}_{\lambda,\theta}}.
\end{align*}
The implicit constant does not depend on $\lambda$.
\end{lem}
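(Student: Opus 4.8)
The plan is to adapt the classical inhomogeneous (energy) estimate for Bourgain-type spaces, in the style of Ginibre--Tsutsumi--Velo and Kenig--Ponce--Vega, to the weighted spaces $Y^k_{\lambda,\theta}$, $Z^k_{\lambda,\theta}$ and to the Kawahara symbol $p_\lambda$. First I would pass to the Fourier side. Taking the spatial Fourier transform of $u(t):=\psi(t)\int_0^t W_\lambda(t-t')F(t')\,dt'$, inserting the temporal Fourier representation of $F$, and using $\int_0^t e^{i(t-t')p_\lambda(\xi)}e^{it'\tau}\,dt'=e^{itp_\lambda(\xi)}(e^{it(\tau-p_\lambda(\xi))}-1)/(i(\tau-p_\lambda(\xi)))$, one obtains the standard representation
\[
\mathcal F_x u(\xi,t)=\frac{\psi(t)}{2\pi i}\int \frac{e^{it\tau}-e^{itp_\lambda(\xi)}}{\tau-p_\lambda(\xi)}\,\hat F(\xi,\tau)\,d\tau.
\]
I would then split the $\tau$-integral according to $|\tau-p_\lambda(\xi)|<1$ (a \emph{free} piece) and $|\tau-p_\lambda(\xi)|\geq 1$ (a \emph{stationary} piece), since the two are controlled by genuinely different mechanisms.

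On the region $|\tau-p_\lambda(\xi)|<1$ I would Taylor expand $(e^{it(\tau-p_\lambda)}-1)/(\tau-p_\lambda)=\sum_{n\geq1}(it)^n(\tau-p_\lambda)^{n-1}/n!$, which writes this part of $u$ as $\sum_{n\geq 1}\frac{i^{\,n-1}}{n!}\,t^n\psi(t)\,W_\lambda(t)g_n$, where $\hat g_n(\xi)=\int_{|\tau-p_\lambda|<1}(\tau-p_\lambda(\xi))^{n-1}\hat F(\xi,\tau)\,d\tau$. Each summand is a free Kawahara evolution carrying the Schwartz time-cutoff $t^n\psi(t)$, so the homogeneous estimate of the preceding lemma bounds its $Y^k_{\lambda,\theta}$-norm by $\|g_n\|_{H^k}$ with a constant of at most polynomial growth in $n$. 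Since $|\tau-p_\lambda|^{\,n-1}\leq 1$ on the support, $\|g_n\|_{H^k}\lesssim \big\|\,\lb\xi\rb^{k}\!\int_{|\tau-p_\lambda|<1}|\hat F|\,d\tau\,\big\|_{L^2_\xi}$, which is dominated by the $L^2_\xi L^1_\tau$ term of $\|F\|_{Z^k_{\lambda,\theta}}$, and the factor $1/n!$ makes the series converge.

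The stationary piece splits once more: the term with $e^{itp_\lambda(\xi)}$ is $-\frac1i\psi(t)W_\lambda(t)h$ with $\hat h(\xi)=\frac{1}{2\pi}\int_{|\tau-p_\lambda|\geq1}\hat F/(\tau-p_\lambda)\,d\tau$, again handled by the homogeneous estimate and the $L^2_\xi L^1_\tau$ term of the $Z$-norm; the term with $e^{it\tau}$ is $\psi(t)$ times the inverse transform of $\hat F\,\chi_{\{|\tau-p_\lambda|\geq1\}}/(i(\tau-p_\lambda))$. For this last term the division by $\lb\tau-p_\lambda\rb$ sends the $Y$-weight $w_{k,\lambda,\theta}$ exactly to the first term $\lb\tau-p_\lambda\rb^{-1}w$ of the $Z$-norm, so the only remaining issue is the time localization $\psi(t)$, i.e.\ the $\tau$-convolution by $\hat\psi$. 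On $P_2$, where the modulation exponent $\tfrac{3}{10}-\theta$ is below $\tfrac12$, $\psi$-multiplication is bounded on the weighted space by a routine argument exploiting the rapid decay of $\hat\psi$. \textbf{The main obstacle is the region $P_1$}, where the exponent $\tfrac12+2\theta$ exceeds $\tfrac12$ and $\psi$-multiplication fails at the endpoint; this is precisely the role of the extra $L^2_\xi L^1_\tau$ correction built into the $Z$-norm. I expect to resolve it by transferring the surplus power $\lb\tau-p_\lambda\rb^{2\theta}$ across the convolution into the decay of $\hat\psi$ down to the borderline exponent $\tfrac12$, and capturing the endpoint through the $L^\infty_t H^k_x$ component of the $Y^k_{\lambda,\theta}$-norm together with the free-evolution bound, exactly as in Kato's treatment. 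Throughout, the implicit constants depend only on $\psi$, the coefficients $1/n!$, and the homogeneous estimate, all of which are uniform for $0<\lambda\leq 1$, yielding the asserted $\lambda$-independence.
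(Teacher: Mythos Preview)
The paper does not supply its own proof of this lemma: it is quoted verbatim from \cite[Proposition 2.6]{Kato} and no argument is given in the present paper. Your outline is precisely the standard proof from that reference (and from the Ginibre--Tsutsumi--Velo / Kenig--Ponce--Vega tradition on which it is modeled): the Fourier-side representation, the near/far modulation split at $|\tau-p_\lambda(\xi)|=1$, the Taylor expansion yielding a sum of free evolutions on the near piece, and the further split of the far piece into a free term plus a term where division by $\tau-p_\lambda$ converts the $Y$-weight into the $Z$-weight. Your identification of the genuine obstacle---that on $P_1$ the modulation exponent $\tfrac12+2\theta$ exceeds $\tfrac12$ so that multiplication by $\psi(t)$ is not automatically bounded, and that the $L^2_\xi L^1_\tau$ component of the $Z$-norm is precisely the corrective device---is exactly Kato's point. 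The $L^\infty_t H^k_x$ part of the $Y^k_{\lambda,\theta}$-norm, which you mention only in passing, is handled directly by the same $L^2_\xi L^1_\tau$ term (since $\|u\|_{L^\infty_t H^k_x}\lesssim \|\lb\xi\rb^{k}\hat u\|_{L^2_\xi L^1_\tau}$ after bounding $\psi$), so there is no gap. The $\lambda$-uniformity is inherited from the homogeneous estimate, as you say.
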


\begin{lem} \label{lem: multilinear lwp}
There exists $0<\theta<1$ and $\kappa >0$ such that the followings hold.
    \begin{enumerate}[label=(\roman*)]
    \item For $0 \leq s\leq k+4$, we have $\| uv \|_{X^{s,-\frac12+2\theta}} \lesssim \|u\|_{X^{s,\frac12+\theta}}\|v\|_{Y^{k,\frac12+\theta}}$.
    \item For $0 \leq s < k+\frac{3}{2}$ and $k \geq -2$, we have $\| uv \|_{X^{s,-\frac12+2\theta}_{\lambda}} \lesssim \lambda^{-\kappa}\|u\|_{X^{s,\frac12+\theta}_{\lambda}}\|v\|_{Y^{k}_{\lambda,\theta}}$.
    \item For $s \geq 0$, we have $\| u_1 \overline{u_2} u_3 \|_{X^{s,-\frac12+2\theta}_{\lambda}} \lesssim  \lambda^{-\kappa}\|u_1\|_{X^{s,\frac12+\theta}_{\lambda}}\|u_2\|_{X^{s,\frac12+\theta}_{\lambda}}\|u_3\|_{X^{s,\frac12+\theta}_{\lambda}}$.
    \item For $k > -\frac32$, we have $\| \partial_{x}(v_1 v_2) \|_{Y^{k,-\frac12+\theta}} \lesssim \|v_1\|_{Y^{k,\frac12+\theta}}\|v_2\|_{Y^{k,\frac12+\theta}}$.
    \item For $k \geq -2$, we have $\| \partial_{x}(v_1 v_2) \|_{Z^{k}_{\lambda,\theta}} \lesssim \|v_1\|_{Y^{k}_{\lambda,\theta}}\|v_2\|_{Y^{k}_{\lambda,\theta}}$.
    \item For $k<\min(8s+1,s+2)$, we have $\| \partial_{x}(u_1 \overline{u_2}) \|_{Y^{k,-\frac12+2\theta}_{\lambda}} \lesssim \lambda^{-\kappa} \|u_1\|_{X^{s,\frac12+\theta}_{\lambda}}\|u_2\|_{X^{s,\frac12+\theta}_{\lambda}}$.
    \end{enumerate}
    The implicit constants do not depend on $\lambda$.
\end{lem}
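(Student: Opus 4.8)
The plan is to prove all six estimates by the standard scheme for multilinear estimates in Fourier restriction spaces: dualize, reduce to a weighted trilinear convolution over $\{\xi=\xi_1+\xi_2,\ \tau=\tau_1+\tau_2\}$, and control it by combining two mechanisms. The first is the resonance identity. For each interaction the three modulation phases $\tau-p(\xi)$ and $\tau_i+\xi_i^2$ add up to a resonance function $H$ depending only on the frequencies, so that the largest modulation is $\gtrsim|H|$. For a Schr\"odinger--Kawahara interaction one linearizes the convolution constraint by the change of variables of Lemma \ref{lemma: Wu}, whose Jacobian is $|5\delta\xi^4+3\gamma\xi^2+2\xi_1|=|p'(\xi)+2\xi_1|$, while for a Kawahara--Kawahara interaction $H$ factors as $5\delta\,\xi\xi_1\xi_2(\xi_1^2+\xi_1\xi_2+\xi_2^2)+\dots$. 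Lemmas \ref{lemma: Wu} and \ref{lem: key estimate} already package this gain, carrying the $|J|^{1/2}$ and $|J|^{1/4}$ weights needed to pay for the derivative $\partial_x$. The second mechanism is used where all modulations are small (the non-resonant regime): there one discards the modulation weights and estimates directly through the linear bounds---Kato smoothing \eqref{eq: Kawahara Kato}, the Strichartz and maximal-function estimates, and their interpolants \eqref{eq: Sch interpolation 4}, \eqref{eq: Kawa interpolation 3}---combined by H\"older, as already assembled in Lemma \ref{lem: key estimate 2}. A Littlewood--Paley decomposition in frequency, followed by a case split according to which frequency and which modulation is largest, reduces each part to finitely many such configurations.

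I would dispatch the routine pieces first. The cubic term (iii) follows from the $L^4_{t,x}$ Strichartz estimate \eqref{eq: Schrodinger L4}: by duality the quadrilinear form is bounded by a product of four $L^4_{t,x}$ norms (exponents summing to $1$), each controlled by an $X^{0,\frac38,1}$ norm, and $\frac38<\frac12+\theta$; the $\langle\xi\rangle^s$ weight is placed on the highest-frequency factor, which is harmless for $s\ge0$. The purely dispersive Kawahara bilinear estimate (iv) uses the factorization of the quintic resonance $\xi^5-\xi_1^5-\xi_2^5=5\xi\xi_1\xi_2(\xi_1^2+\xi_1\xi_2+\xi_2^2)$: the fifth-order gain comfortably absorbs the derivative $|\xi|$ and yields the threshold $k>-\frac32$ in the high$\,\times\,$high$\,\to\,$low configuration. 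This is precisely where $\delta\neq0$ is indispensable, matching the remark in the introduction that the analogous estimate fails for $\delta=0$. For the coupling-into-Schr\"odinger estimate (i) the resonance is $H=\xi_2(\xi+\xi_1)+p(\xi_2)$; in the only delicate regime, where a high-frequency rough long wave $v$ produces a high-frequency short wave, I would combine the extraction of a power of $H\sim\delta\xi_2^5$ with the fifth-order Kato smoothing \eqref{eq: Kawahara Kato} acting on $v$, which together transfer up to four derivatives onto the long-wave factor, producing the stated condition $s\le k+4$.

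The main obstacle is the coupling-into-Kawahara estimate (vi), whose two competing constraints $k<s+2$ and $k<8s+1$ reflect the tension between the two mechanisms. After dualizing and splitting into $|\xi|<N$ and $|\xi|\ge N$ (with $N$ as in Lemma \ref{lem: Kawahara group estimates}), the derivative weight $|\xi|\langle\xi\rangle^k$ must be paid for. In the resonant regime I would bound it by the Jacobian gain, invoking Lemma \ref{lemma: Wu} or the interpolated Lemma \ref{lem: key estimate}; since $|J|^{1/2}\sim|\xi|^2$ at high frequency this absorbs the derivative with room to spare, and distributing the remaining $\langle\xi_i\rangle^s$ weights across the two Schr\"odinger inputs gives a condition of the form $k<s+2$. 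In the non-resonant regime the gain is too weak, and I would instead apply Lemma \ref{lem: key estimate 2}, whose mixed maximal-function/Kato-smoothing norms (note the negative Kawahara index $Y^{-\frac{7}{16},\cdot}$, reflecting smoothing) yield the complementary condition $k<8s+1$. Verifying that these two regimes together cover every frequency/modulation configuration, and that the exponents of Lemma \ref{lem: key estimate 2} are compatible with $\frac12+\theta$ for small $\theta$, is the crux of the argument.

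Finally, the $\lambda$-dependent estimates (ii), (iii), (v), (vi) would be derived from their unscaled counterparts through the scalings $S_\lambda(t)=e^{it\lambda^3\partial_x^2}$ and $W_\lambda(t)=e^{itp_\lambda(\partial_x/i)}$, keeping track of the powers of $\lambda$ produced by the rescaling to obtain the stated $\lambda^{-\kappa}$. The delicate point, and the reason for the modified norm $w_{k,\lambda,\theta}$ with its $P_1/P_2$ split, is the low-frequency region: after scaling, the contributions $\gamma\lambda^2\xi^3$ and $\delta\xi^5$ balance at a $\lambda$-dependent frequency, the fifth-order smoothing degenerates, and the Jacobian $p_\lambda'(\xi)+2\xi_1$ may vanish. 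The set $P_1$ isolates where the modulation $\langle\tau-p_\lambda(\xi)\rangle$ genuinely dominates, on which the unscaled analysis carries over, while on $P_2$ the extra weight $\langle\xi\rangle^{1+15\theta}\langle\tau-p_\lambda(\xi)\rangle^{\frac{3}{10}-\theta}$ compensates for the lost dispersion; tracking these two contributions separately and summing the Littlewood--Paley pieces completes the proof.
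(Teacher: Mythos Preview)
Your overall plan coincides with the paper's, which proves only (ii) in detail and dispatches the rest to the literature (\cite{Bourgain} for (iii), \cite{CLMW} for (iv), \cite{Kato} for (v), and \cite{Wu} via Lemma~\ref{lemma: Wu} for (i) and (vi)); your outlines for those parts are consistent with the cited arguments.

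There is, however, a misconception in your final paragraph that would misdirect you on (ii) and (v). The $P_1/P_2$ decomposition is \emph{not} a low-frequency device tied to the $\lambda$-scaling or to vanishing of the Jacobian $p_\lambda'(\xi)+2\xi_1$; it is a \emph{high-modulation} device already present in Kato's Kawahara theory at $\lambda=1$. The region $P_2$ is where $\lb\tau-p_\lambda(\xi)\rb\gtrsim\lb\xi\rb^5$, and the reweighting $\lb\xi\rb^{1+15\theta}\lb\tau-p_\lambda(\xi)\rb^{\frac{3}{10}-\theta}$ is chosen precisely so that one can trade modulation for spatial regularity there; this trade is what pushes (v) down to $k\ge-2$. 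Consequently, the $\lambda$-dependent estimates are \emph{not} obtained by rescaling their $\lambda=1$ counterparts. In the paper's proof of (ii) one dualizes and splits the $Y^k_{\lambda,\theta}$ factor into its $P_1$ and $P_2$ contributions: the $P_1$ piece is handled exactly as in (i), while on $P_2$ one uses $\lb\tau_2-p_\lambda(\xi_2)\rb^{\frac{3}{10}-\theta}\gtrsim\lb\xi_2\rb^{\frac32-5\theta}$ (for $|\xi_2|$ large) to bound the entire frequency multiplier by a constant provided $s<k+\tfrac32$---this is the origin of that threshold, distinct from the $s\le k+4$ of (i)---and then closes by the $L^4L^4L^2$ H\"older inequality with the rescaled Strichartz bound \eqref{eq: Schrodinger L4 dilated}, which is the sole source of the $\lambda^{-\kappa}$ loss.
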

We only prove $(ii)$ here. Using Lemma \ref{lemma: Wu}, $(i)$ and $(vi)$ can be proved much similar to \cite[Lemma 3.3, Lemma 3.4]{Wu}. The proof of $(iii)$ can be found in \cite{Bourgain}. The estimates $(iv)$ and $(v)$ are proved in \cite{CLMW} and \cite{Kato} respectively. See \cite{GW} for numerically tracking the dependence on $\lambda$. 
\begin{proof}[Proof of (ii)]
It suffices to bound the quantities
\begin{align} \label{eq: uv dual A}
\iint_{\begin{subarray}{l}\\ \xi=\xi_{1}+\xi_{2}\\ \tau=\tau_{1}+\tau_{2} \end
{subarray}} \frac{\lb \xi \rb^{s}|\hat{f}(\xi, \tau)|}{\lb \tau + \lambda^3 \xi^2 \rb^{\frac12+2\theta}}\frac{|\hat{g}(\xi_1,\tau_1)|}{\lb \xi_1 \rb^{s}\lb \tau_1 + \lambda^3\xi_1^2 \rb^{\frac12+\theta}}\frac{\chi_{P_1}(\xi_2,\tau_2)|\hat{h}(\xi_2,\tau_2)|}{\lb \xi_2 \rb^{k}\lb \tau_2 - p_{\lambda}(\xi_2) \rb^{\frac12+\theta}}    
\end{align}
and 
\begin{align} \label{eq: uv dual B}
\iint_{\begin{subarray}{l}\\ \xi=\xi_{1}+\xi_{2}\\ \tau=\tau_{1}+\tau_{2} \end
{subarray}} \frac{\lb \xi \rb^{s}|\hat{f}(\xi, \tau)|}{\lb \tau + \lambda^3 \xi^2 \rb^{\frac12+2\theta}}\frac{|\hat{g}(\xi_1,\tau_1)|}{\lb \xi_1 \rb^{s}\lb \tau_1 + \lambda^3\xi_1^2 \rb^{\frac12+\theta}}\frac{\chi_{P_2}(\xi_2,\tau_2)|\hat{h}(\xi_2,\tau_2)|}{\lb \xi_2 \rb^{k}\lb \tau_2 - p_{\lambda}(\xi_2) \rb^{\frac{3}{10}-\theta}}    
\end{align}
by $\lambda^{-\frac32}\| f\|_{L^2} \|g\|_{L^2} \|h\|_{L^2}$. Since (i) implies the bound of (\ref{eq: uv dual A}), we only consider (\ref{eq: uv dual B}). Note that
\begin{align*}
    \frac{\chi_{P_2}(\xi_2,\tau_2)\lb \xi \rb^{s}}{\lb \xi_1 \rb^{s}\lb \xi_2 \rb^{k}\lb \tau_2 - p_{\lambda}(\xi_2) \rb^{\frac{3}{10}-\theta}} 
    & \lesssim \frac{\lb \xi \rb^{s}}{\lb \xi_1 \rb^{s}\lb \xi_2 \rb^{k}\lb \frac{31}{32}\delta |\xi_2|^5+\frac{7}{8} \lambda^2 \gamma |\xi_2|^3 \rb^{\frac{3}{10}-\theta}} \\
    & \lesssim 
     \begin{cases}
        \lb \xi \rb^{s}\lb \xi_1 \rb^{-s}\lb \xi_2 \rb^{-k-\frac{3}{2}+5\theta} & \text{if} \quad |\xi_2| \gg |\gamma|^{\frac12}|\delta|^{-\frac12}\lambda, \\
        \lb \xi \rb^{s}\lb \xi_1 \rb^{-s}\lb \xi_2 \rb^{-k} & \text{if} \quad |\xi_2| \lesssim |\gamma|^{\frac12}|\delta|^{-\frac12}\lambda.
     \end{cases}
\end{align*}
Recall that we are assuming $0 \leq s < k+\frac{3}{2}$. For $|\xi_2| \gg |\gamma|^{\frac12}|\delta|^{-\frac12}\lambda$ and $\theta>0$ such that $5\theta < k-s+\frac{3}{2}$, we have
\begin{align} \label{eq: multiplier bound A}
     \frac{\chi_{P_2}(\xi_2,\tau_2)\lb \xi \rb^{s}}{\lb \xi_1 \rb^{s}\lb \xi_2 \rb^{k}\lb \tau_2 - p_{\lambda}(\xi_2) \rb^{\frac{3}{10}-\theta}} \lesssim 1.
\end{align}
For $|\xi_2| \lesssim |\gamma|^{\frac12}|\delta|^{-\frac12}\lambda$, we have
\begin{align} \label{eq: multiplier bound B}
    \frac{\chi_{P_2}(\xi_2,\tau_2)\lb \xi \rb^{s}}{\lb \xi_1 \rb^{s}\lb \xi_2 \rb^{k}\lb \tau_2 - p_{\lambda}(\xi_2) \rb^{\frac{3}{10}-\theta}} \lesssim \lb \xi_2 \rb^{s-k} \lesssim 1.
\end{align}
By (\ref{eq: Schrodinger L4}) and a simple change of variables, we can see that 
\begin{align} \label{eq: Schrodinger L4 dilated}
    \left \| \left [ \frac{\hat{f}(\xi,\tau)}{\lb \tau+\lambda^3 \xi^2\rb^{\frac38+}}\right ]^{\vee}\right \|_{L^{4}_{x,t}} \lesssim \lambda^{-\frac38}\|f\|_{L^2_{x,t}}.
\end{align}
Using (\ref{eq: multiplier bound A}), (\ref{eq: multiplier bound B}), (\ref{eq: Schrodinger L4 dilated}) and the $L^4L^4L^2$-H\"older inequality, we obtain the desired bound.
\end{proof}

\subsection{Local well-posedness}

\begin{proof}[Proof of Theorem \ref{thm: LWP}]
We only consider the case  $0 \leq s < k+\frac{3}{2}$, $k \geq -2$, and $k<\min(8s+1,s+2)$. The higher regularity case can be treated without using the dilation argument. We follow the argument given in \cite{GW}. For $0<\lambda<1$, we set $(u_{0,\lambda}(x),v_{0,\lambda}(x)):=(\lambda^4 u_{0}(\lambda x),\lambda^4 v_{0}(\lambda x))$ and $(u_{\lambda}(x,t), v_{\lambda}(x,t)):=(\lambda^4 u(\lambda x, \lambda^5 t),\lambda^4 v(\lambda x, \lambda^5 t)).$
Then $(u_{\lambda},v_{\lambda})$ solves
\begin{equation}\label{eq: SK dilated}
\begin{cases}
i\partial_{t} u_{\lambda} + \lambda^3 \partial^{2}_{x}u_{\lambda} = \alpha \lambda u_{\lambda}v_{\lambda} +\beta \lambda^{-3 }|u_{\lambda}|^2u_{\lambda},  \\
\partial_{t} v_{\lambda} + \gamma \lambda^{2} \partial^{3}_{x}v_{\lambda}-\delta \partial^{5}_{x}v_{\lambda} + v_{\lambda}\partial_{x}v_{\lambda} = \varepsilon\partial_{x}|u_{\lambda}|^2, \\
u_{\lambda}(x,0) = u_{0,\lambda}(x), \quad v_{\lambda}(x,0) = v_{0,\lambda}(x).
\end{cases}
\end{equation}
Let
\begin{align} \label{eq: LWP operators}
    \begin{cases}
       \Gamma_1(u,v):=\psi_T S_{\lambda}(t)u_{0,\lambda}-i\psi_T\int_{0}^{t}S_{\lambda}(t-t')(\alpha \lambda uv +\beta \lambda^{-3} |u|^2u) dt', \\
       \Gamma_2(u,v):=\psi_T W_{\lambda}(t)v_{0,\lambda}+\psi_T\int_{0}^{t}W_{\lambda}(t-t')(-v\partial_{x}v+\varepsilon\partial_{x}|u|^2) dt'.
    \end{cases}
\end{align}
Consider the ball
\begin{align*}
    B:= \{(u,v) \in X^{s,\frac12 +\theta}_{\lambda} \times Y^{k}_{\lambda,\theta} : \|u\|_{X^{s,\frac12 +\theta}_{\lambda}} \leq M \quad \text{and} \quad \|v\|_{Y^{k}_{\lambda,\theta}} \leq \epsilon_0 \}
\end{align*}
where $\epsilon_{0}$ and $M$ will be chosen later. For $(u,v) \in B$ we have
\begin{align*}
     \|\Gamma_1(u,v)\|_{X^{s,\frac12+ \theta}_{\lambda}} 
     &\leq C_1 \|u_{0,\lambda}\|_{H^{s}}+C_2 T^{\theta}\left ( \lambda \| uv \|_{X^{s,-\frac12 +2\theta}_{\lambda}} + \lambda^{-3} \| |u|^2 u \|_{X^{s,-\frac12 +2\theta}_{\lambda}} \right ) \\
     & \leq C_1 \|u_{0,\lambda}\|_{H^{s}}+C_3 T^{\theta}\left ( \lambda^{1-\kappa} \|u\|_{X^{s,\frac12+\theta}_{\lambda}}\|v\|_{Y^{k}_{\lambda,\theta}}+ \lambda^{-3-\kappa} \| u \|_{X^{s,\frac12 +\theta}_{\lambda}}^3 \right ) \\
     & \leq C_1 \|u_{0,\lambda}\|_{H^{s}}+C_3 T^{\theta}\left ( \lambda^{1-\kappa} M \epsilon_0 + \lambda^{-3-\kappa} M^3 \right ),
\end{align*}
and by (\ref{eq: Y embedding}),
\begin{align*}
     \|\Gamma_2(u,v)\|_{Y^{k}_{\lambda,\theta}} 
     & \leq \! \begin{multlined}[t]
          C_1 \|v_{0,\lambda}\|_{H^{k}} \\ +\left \|\psi_T\int_{0}^{t}W_{\lambda}(t-t') v\partial_{x}v dt' \right \|_{Y^{k}_{\lambda,\theta}} + \left \|\psi_T\int_{0}^{t}W_{\lambda}(t-t')\partial_{x}|u|^2 dt' \right \|_{Y^{k,\frac12 +2\theta}_{\lambda}} 
     \end{multlined}\\
     &\leq C_1 \|v_{0,\lambda}\|_{H^{k}}+C_2\| v \partial_x v \|_{Z^{k}_{\lambda,\theta}} + C_2 T^{\theta} \| \partial_x |u|^2 \|_{Y^{k,-\frac12 +\theta}_{\lambda}} \\
     & \leq C_1 \|v_{0,\lambda}\|_{H^{k}}+C_3 \|v\|_{Y^{k}_{\lambda,\theta}}^2+C_3 T^{\theta} \lambda^{-\kappa}\| u \|_{X^{s,\frac12 +\theta}_{\lambda}}^2 \\
     & \leq C_1 \|v_{0,\lambda}\|_{H^{k}}+C_3 \epsilon_{0}^2+C_3 T^{\theta} \lambda^{-\kappa}M^2.
\end{align*}
We can similarly estimate the difference $\|\Gamma_{1}(u,v)-\Gamma_{1}(\t u,\t v) \|_{X^{s,\frac12+ \theta}_{\lambda}}+\|\Gamma_{2}(u,v)-\Gamma_{2}(\t u,\t v)\|_{Y^{k}_{\lambda,\theta}}$. Let $M=2C_1 \|u_{0,1}\|_{H^{s}}$. Then for all $0<\lambda<1$ we have $C_{1}\|u_{0,\lambda}\|_{H^{s}} \leq M/2$. Take $\epsilon_0 >0$ such that $4C_3\epsilon_0 < 1$. Note that 
\begin{align*}
    \|v_{0,\lambda}\|_{H^k} \leq \lambda^{\frac32}\|v_0\|_{H^k}
\end{align*}
for $k \geq -2$. Take sufficiently small $\lambda$ such that $C_1 \|v_{0,\lambda}\|_{H^{k}} < \epsilon_0/2$. Then for sufficiently small $T$, we can see that the map $(\Gamma_1,\Gamma_2)$ is a contraction in $B$. Undoing the scaling, we obtain a local solution to the system (\ref{eq: SK}). For the uniqueness of the solutions and the Lipschitz continuity of the solution map, see \cite{Kato}.
\end{proof}

\section{Global theory} \label{sec: GWP}

\subsection{Linear and bilinear estimates}

\begin{lem}\cite[Proposition 2.1]{GP} \label{lem: group, duhamel}
Suppose $0<T\leq1$. If $0 \leq b,c \leq \frac12$ and $b+c \leq 1$, then 
\begin{enumerate}[label=(\roman*)]
    \item $\| \psi_T S(t)u_0 \|_{X^{0,b,1}} \lesssim T^{\frac12 -b}\|u_0\|_{L^2}$.
    \item $\| \psi_T W(t)v_0 \|_{Y^{0,b,1}} \lesssim T^{\frac12 -b}\|v_0\|_{L^2}$.
    \item $\| \psi_T \int_{0}^{t}S(t-t')F dt' \|_{X^{0,b,1}} \lesssim T^{1-b-c}\|F\|_{X^{0,-c,\infty}}$.
    \item $\| \psi_T \int_{0}^{t}W(t-t')F dt' \|_{Y^{0,b,1}} \lesssim T^{1-b-c}\|F\|_{Y^{0,-c,\infty}}$.
    \end{enumerate}
\end{lem}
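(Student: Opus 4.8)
The plan is to reduce everything to explicit computations with the space-time Fourier transform, exploiting that $S(t)=e^{it\partial_x^2}$ and $W(t)=e^{itp(\partial_x/i)}$ are unitary Fourier multiplier groups with real symbols $-\xi^2$ and $p(\xi)$. Because these symbols enter only through the modulation weights $\lb\tau+\xi^2\rb$ and $\lb\tau-p(\xi)\rb$ (equivalently through the cutoffs $\varphi_j(\tau+\xi^2)$ and $\varphi_j(\tau-p(\xi))$), the Schr\"odinger statements $(i),(iii)$ and the Kawahara statements $(ii),(iv)$ are proved by identical arguments; I would carry out the Schr\"odinger cases and only indicate that $-\xi^2$ is replaced by $p(\xi)$ for the others.

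For $(i)$, I would first compute that the space-time Fourier transform of $\psi_T(t)S(t)u_0$ equals $\hat{\psi_T}(\tau+\xi^2)\hat{u_0}(\xi)$. The crucial structural point is that the $\xi$-dependence factors out of every dyadic block: for fixed $\xi$ the change of variable $\mu=\tau+\xi^2$ has unit Jacobian, so
\[
\|\psi_T S(t)u_0\|_{X^{0,b,1}}=\|u_0\|_{L^2}\sum_{j\geq 0}2^{bj}\|\varphi_j(\mu)\hat{\psi_T}(\mu)\|_{L^2_\mu}.
\]
It then remains to prove the one-dimensional bound $\sum_{j\geq0}2^{bj}\|\varphi_j\hat{\psi_T}\|_{L^2}\lesssim T^{\frac12-b}$. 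Writing $\hat{\psi_T}(\mu)=T\hat\psi(T\mu)$ and using that $\hat\psi$ is Schwartz, the block carrying $\varphi_j$ (supported in $|\mu|\sim 2^j$) contributes $\lesssim T\,2^{j/2}\lb 2^jT\rb^{-M}$ for any $M$; multiplying by $2^{bj}$ and splitting the sum at $2^j\sim T^{-1}$ (geometric growth below the threshold, rapid decay above) yields exactly $T^{\frac12-b}$. This proves $(i)$, and $(ii)$ follows verbatim with $\lb\tau-p(\xi)\rb$ in place of $\lb\tau+\xi^2\rb$.

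For $(iii)$, I would pass to the interaction representation. Taking the time Fourier transform of $\hat F(\xi,\cdot)$ and integrating the elementary time integral $\int_0^t e^{-i(t-t')\xi^2}e^{it'\tau'}dt'$ explicitly, one finds that the space-time Fourier transform of $\psi_T\int_0^t S(t-t')F\,dt'$ is a sum of a resonant term, proportional to $\int \hat F(\xi,\tau')(\tau'+\xi^2)^{-1}\hat{\psi_T}(\tau-\tau')\,d\tau'$, and a homogeneous term of the form $\hat{\psi_T}(\tau+\xi^2)\,\hat g(\xi)$ with $\hat g(\xi)=\frac{1}{2\pi i}\int \hat F(\xi,\tau')(\tau'+\xi^2)^{-1}\,d\tau'$. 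The homogeneous term is precisely of the shape handled by $(i)$, so it is controlled once $\|g\|_{L^2}$ is bounded. To treat both contributions I would split $F$ into its low-modulation part $\lb\tau+\xi^2\rb\lesssim T^{-1}$ and its high-modulation part $\lb\tau+\xi^2\rb\gtrsim T^{-1}$. On the high-modulation part the factor $(\tau'+\xi^2)^{-1}$ supplies a genuine gain; combined with a Cauchy-Schwarz in $\tau'$ (convergent for $c<\frac12$, producing $T^{\frac12-c}$) and the $T^{\frac12-b}$ from $(i)$, this yields the power $T^{1-b-c}$ against the $X^{0,-c,\infty}$ norm. On the low-modulation part I would instead keep the kernel in the combined form $e^{-it\xi^2}(e^{it(\tau'+\xi^2)}-1)/(i(\tau'+\xi^2))$ and Taylor-expand, so that the apparent $(\tau'+\xi^2)^{-1}$ singularity cancels; each resulting term is a group term carrying an extra factor $t^n$, and since $|t|\lesssim T$ on $\supp\psi_T$ the $n=1$ term already supplies the required power of $T$. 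Statement $(iv)$ is identical.

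I expect the main obstacle to be the low-modulation region in $(iii)$ and $(iv)$: reconciling the bookkeeping of the powers of $T$ (which must combine to exactly $T^{1-b-c}$ throughout the range $0\leq b,c\leq\frac12$, $b+c\leq1$) with the cancellation of the $(\tau'+\xi^2)^{-1}$ singularity, all while preserving the asymmetric Besov structure, namely the $\ell^1$-summation defining $X^{0,b,1}$ on the output against the $\ell^\infty$-type $X^{0,-c,\infty}$ norm on $F$. Keeping these dyadic summations honest, rather than losing a logarithm at the threshold $2^j\sim T^{-1}$ or at the endpoints $b,c\in\{0,\tfrac12\}$, is the delicate part; the group estimates $(i),(ii)$ and the convolution identity above are otherwise routine.
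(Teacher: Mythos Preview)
The paper does not give its own proof of this lemma: it is stated with a citation to \cite[Proposition 2.1]{GP} and used as a black box. Your proposal is a correct outline of the standard argument, and it is essentially the proof one finds in \cite{GP} (and, for the non-Besov $X^{s,b}$ analogue, in Ginibre--Tsutsumi--Velo and related works): compute the space-time Fourier transform of $\psi_T S(t)u_0$ to reduce $(i)$ to a one-variable dyadic bound on $\hat{\psi_T}$, and for $(iii)$ use the identity
\[
\int_0^t S(t-t')F\,dt'\;\widehat{\ }\;(\xi,t)=\frac{1}{2\pi}\,e^{-it\xi^2}\int \hat F(\xi,\tau')\,\frac{e^{it(\tau'+\xi^2)}-1}{i(\tau'+\xi^2)}\,d\tau',
\]
split according to $\lb\tau'+\xi^2\rb\lessgtr T^{-1}$, separate the two exponentials on the high-modulation piece, and Taylor-expand to kill the apparent singularity on the low-modulation piece. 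Your identification of the delicate point---tracking the exact power $T^{1-b-c}$ through the $\ell^1/\ell^\infty$ Besov structure at the threshold $2^j\sim T^{-1}$---is also where the work lies in \cite{GP}; there is nothing genuinely different between your route and theirs.
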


\begin{lem} \label{lem: multilinear}
We have
    \begin{enumerate}[label=(\roman*)]
    \item $\| uv \|_{X^{0,-\frac14,\infty}} \lesssim \|u\|_{X^{0,\frac38,1}}\|v\|_{Y^{0,\frac38,1}}$.
    \item $\| u_1 \overline{u_2} u_3 \|_{X^{0,-\frac14,\infty}} \lesssim \|u_1\|_{X^{0,\frac38,1}}\|u_2\|_{X^{0,\frac38,1}}\|u_3\|_{X^{0,\frac38,1}}$.
    \item $\| \partial_{x}(v_1 v_2) \|_{Y^{0,-\frac14,\infty}} \lesssim \|v_1\|_{Y^{0,\frac38,1}}\|v_2\|_{Y^{0,\frac38,1}}$.
    \item $\| \partial_{x}(u_1 \overline{u_2}) \|_{Y^{0,-\frac14,\infty}} \lesssim \|u_1\|_{X^{0,\frac38,1}}\|u_2\|_{X^{0,\frac38,1}}$.
    \end{enumerate}
\end{lem}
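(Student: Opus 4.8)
The plan is to treat all four estimates by duality, reducing each to a single multilinear estimate over the convolution hyperplane. For the $\ell^\infty$-Besov output spaces one has the clean pairing characterization
\[
\|F\|_{X^{0,-\frac14,\infty}} \sim \sup\{\,|\langle F,g\rangle_{L^2_{t,x}}| : \|g\|_{X^{0,\frac14,1}}\le 1\,\},
\]
(and the same with $X$ replaced by $Y$), which follows at once from writing both norms over dyadic modulation shells $|\tau+\xi^2|\sim 2^j$ and pairing them shell by shell. Applying Plancherel and expanding the product, estimate (i) reduces to bounding
\[
\left|\iint_{\xi=\xi_1+\xi_2,\ \tau=\tau_1+\tau_2}\hat u(\xi_1,\tau_1)\hat v(\xi_2,\tau_2)\overline{\hat g(\xi,\tau)}\right| \lesssim \|u\|_{X^{0,\frac38,1}}\|v\|_{Y^{0,\frac38,1}}\|g\|_{X^{0,\frac14,1}},
\]
and likewise (ii)--(iv) become trilinear forms, with an extra factor $i\xi$ in (iii) and (iv) coming from $\partial_x$.

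For (i) I would use the $L^3L^3L^3$-H\"older inequality: by (\ref{eq: Schrodinger L3}) the Schr\"odinger factors $u,g$ are controlled in $L^3_{t,x}$ by $\|u\|_{X^{0,\frac14,1}}$ and $\|g\|_{X^{0,\frac14,1}}$, while by (\ref{eq: Kawahara L3}) the high-frequency part $P^N v$ is controlled by $\|v\|_{Y^{-\frac14,\frac14,1}}$; since $\tfrac14<\tfrac38$ the budget closes. The low-frequency part $P_N v$ (with $|\xi|<N$, $N$ a fixed constant) is handled separately: spatial Bernstein gives $\|P_N v\|_{L^3_{t,x}}\lesssim N^{\frac16}\|P_N v\|_{L^3_tL^2_x}$, and interpolating $Y^{0,\frac12,1}\hookrightarrow L^\infty_tL^2_x$ (the embedding noted after Lemma \ref{lem: extension}) with (\ref{eq: Plancherel}) gives $\|v\|_{L^3_tL^2_x}\lesssim\|v\|_{Y^{0,\frac16,1}}\le\|v\|_{Y^{0,\frac38,1}}$. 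For (ii), since all four factors are of Schr\"odinger type, I would instead use the mixed-norm H\"older inequality with three factors in $L^{\frac{24}{5}}_tL^4_x$ and the test function $g$ in $L^{\frac83}_tL^4_x$; the exponents satisfy $3\cdot\tfrac{5}{24}+\tfrac38=1$ and $4\cdot\tfrac14=1$, and (\ref{eq: Schrodinger interpolation for cubic 1}), (\ref{eq: Schrodinger interpolation for cubic 2}) supply exactly the norms $\|u_i\|_{X^{0,\frac13,1}}\le\|u_i\|_{X^{0,\frac38,1}}$ and $\|g\|_{X^{0,\frac14,1}}$.

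Estimate (iv) is the Schr\"odinger$\times$Schr\"odinger$\to$Kawahara interaction and is the global counterpart of part (vi) of Lemma \ref{lem: multilinear lwp}; I would prove it exactly as there, i.e. from the symbol estimate Lemma \ref{lem: key estimate}. After duality the task is to absorb the derivative factor $|\xi|$, and away from the resonant set one has the pointwise bound $|\xi|\lesssim|5\delta\xi^4+3\gamma\xi^2+2\xi_1|^{\frac14}$ (the quartic term dominates for large $|\xi|$), so Lemma \ref{lem: key estimate} applies directly with the norms $\|g\|_{Y^{0,\frac14,1}}$, $\|u_1\|_{X^{0,\frac38,1}}$, and $\|u_2\|_{X^{0,\frac14,1}}\le\|u_2\|_{X^{0,\frac38,1}}$. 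On the resonant set, where $5\delta\xi^4+3\gamma\xi^2+2\xi_1\approx0$ forces $|\xi_1|\sim|\xi_2|\sim|\xi|^4$, the phase relation makes one modulation $\gtrsim|\xi|^5$, and the surplus modulation decay more than compensates the single power of $|\xi|$; a crude Strichartz bound then closes this region.

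The main obstacle is (iii), the pure Kawahara bilinear estimate $\|\partial_x(v_1v_2)\|_{Y^{0,-\frac14,\infty}}\lesssim\|v_1\|_{Y^{0,\frac38,1}}\|v_2\|_{Y^{0,\frac38,1}}$, which is precisely the inequality flagged in the introduction as failing when $\delta=0$. Here the derivative loss $|\xi|=|\xi_1+\xi_2|$ cannot be absorbed by regularity since $k=0$, so everything must come from the resonance function. The plan is to use the identity $p(\xi)-p(\xi_1)-p(\xi_2)=\xi\xi_1\xi_2\big(3\gamma+5\delta(\xi_1^2+\xi_1\xi_2+\xi_2^2)\big)$ together with the elementary fact that the largest of the three modulations $\langle\tau-p(\xi)\rangle,\langle\tau_1-p(\xi_1)\rangle,\langle\tau_2-p(\xi_2)\rangle$ dominates this quantity. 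Since $\delta\neq0$ the quintic factor $\xi_1^2+\xi_1\xi_2+\xi_2^2$ provides genuine room at high frequencies, so after a dyadic decomposition in frequency and modulation I would, depending on which modulation is largest, trade the gain $\langle\cdot\rangle^{\frac38}$ against the loss $|\xi|$ and estimate the remaining $L^2$-bilinear integral by the Strichartz inequalities (\ref{eq: Kawahara L6 strichartz}), (\ref{eq: Kawahara L3}) or directly by Cauchy--Schwarz and the calculus Lemma \ref{CalcLem}; the low-frequency region $|\xi_1|,|\xi_2|\lesssim N$ is again disposed of by Bernstein as in (i). Verifying that the $2b+c=1$ bookkeeping with $b=\tfrac38$, $c=\tfrac14$ closes in every case---in particular the high$\times$low frequency interactions where the resonance is weakest---is the delicate point, and it is exactly here that the hypothesis $\delta\neq0$ is indispensable.
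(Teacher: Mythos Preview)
Your plan is correct and for parts (i)--(iii) it coincides with the paper's argument: duality to a trilinear form, the $L^3L^3L^3$ H\"older for (i) at high Kawahara frequency (your Bernstein treatment of $|\xi_2|\lesssim 1$ is a harmless variant of the paper's direct Cauchy--Schwarz computation), the $L^{\frac{24}{5}}_tL^4_x\times L^{\frac{24}{5}}_tL^4_x\times L^{\frac{24}{5}}_tL^4_x\times L^{\frac83}_tL^4_x$ H\"older for (ii), and for (iii) the resonance identity $p(\xi)-p(\xi_1)-p(\xi_2)=\xi\xi_1\xi_2(5\delta(\xi_1^2+\xi_1\xi_2+\xi_2^2)+3\gamma)$ followed by a case split on frequency dominance and on which modulation is maximal, closed by the Kawahara $L^3$/$L^4$ Strichartz bounds.

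The one substantive difference is in (iv). You split according to whether $|\xi|\lesssim|5\delta\xi^4+3\gamma\xi^2+2\xi_1|^{1/4}$, applying Lemma~\ref{lem: key estimate} whenever this holds and treating only the narrow strip $\xi_1\approx -\tfrac12(5\delta\xi^4+3\gamma\xi^2)$ separately; there $M\gtrsim|\xi|^5$ and an elementary $L^3\times L^{12/5}\times L^4$ H\"older (the middle factor needing only $X^{0,\frac18,1}$) suffices. The paper instead makes the coarser split $|\xi_1|\lessgtr 2|\delta||\xi|^4$: its Case~3 coincides with part of your non-resonant region, but its Case~2 is wider than your resonant strip, and to close subcases 2(ii)/(iii) (Schr\"odinger modulation maximal) it invokes the additional interpolated Kato-smoothing/maximal-function estimate Lemma~\ref{lem: key estimate 2}. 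Your finer dichotomy pushes more of the integral into the domain of Lemma~\ref{lem: key estimate} and thereby bypasses Lemma~\ref{lem: key estimate 2} altogether, at the cost of needing one extra interpolated Schr\"odinger Strichartz exponent; both routes are valid and of comparable length.
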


\begin{proof} [Proof of (i)]
By duality, it suffices to bound the quantity
\begin{align} \label{eq: uv dual}
\iint_{\begin{subarray}{l}\\ \xi=\xi_{1}+\xi_{2}\\ \tau=\tau_{1}+\tau_{2} \end
{subarray}} \frac{|\hat{f}(\xi, \tau)|}{\lb \tau + \xi^2 \rb^{\frac14}}\frac{|\hat{g}(\xi_1,\tau_1)|}{\lb \tau_1 + \xi_1^2 \rb^{\frac38}}\frac{|\hat{h}(\xi_2,\tau_2)|}{\lb \tau_2 - p(\xi_2) \rb^{\frac38}}    
\end{align}
by $\| f\|_{X^{0,0,1}} \|g\|_{X^{0,0,1}} \|h\|_{Y^{0,0,1}}$. 

\noindent \textit{Case 1: $|\xi_2| \lesssim 1$.}
Applying Cauchy-Schwarz inequality, we can bound (\ref{eq: uv dual}) by
\begin{align*}
        & \int_{\R^4} \frac{|\hat{f}(\xi, \tau)|}{\lb \tau + \xi^2 \rb^{\frac14}}\frac{|\hat{g}(\xi-\xi_2, \tau-\tau_2)|}{\lb (\tau-\tau_2) + (\xi-\xi_2)^2 \rb^{\frac38}}\frac{ |\hat{h}(\xi_{2}, \tau_{2})|}{\lb \tau_2 - p(\xi_2) \rb^{\frac38}} d\xi d\xi_2 d\tau d\tau_2 \\
        & \leq \begin{multlined}[t] \left (\int_{\R^4} |\hat{g}(\xi-\xi_2,\tau-\tau_2)|^2 |\hat{h}(\xi_{2}, \tau_{2})|^2 d\xi d\xi_2 d\tau d\tau_2\right)^{\frac12} \\
        \times \left (\int_{\R^4} \frac{|\hat{f}(\xi, \tau)|^2}{\lb \tau + \xi^2 \rb^{\frac12} \lb (\tau-\tau_2) + (\xi-\xi_2)^2 \rb^{\frac34} \lb \tau_2 - p(\xi_2) \rb^{\frac34}} d\xi d\xi_2 d\tau d\tau_2\right )^{\frac12} \end{multlined} \\
        & \lesssim \| g\|_{L^2} \|h\|_{L^2} \left (\int_{\R^3} \frac{|\hat{f}(\xi, \tau)|^2}{\lb \tau - \xi^2 \rb^{\frac12} \lb \tau - (\xi-\xi_2)^2- p(\xi_2) \rb^{\frac12}} d\xi d\xi_2 d\tau \right )^{\frac12} \\
        & \lesssim \| g\|_{L^2} \|h\|_{L^2} \left (\int_{\R^3} \frac{|\hat{f}(\xi, \tau)|^2}{\lb \xi_2 (\delta \xi_2^4 +\gamma \xi_{2}^2-\xi_2+2\xi)\rb^{\frac12}} d\xi d\xi_2 d\tau \right )^{\frac12} \\
        & \leq \left ( \sup_{\xi} \int \frac{ d\xi d\xi_2 d\tau}{\lb  \xi_2 (\delta \xi_2^4 +\gamma \xi_{2}^2-\xi_2+2\xi)\rb^{\frac12}} \right )^{\frac12}\| f\|_{L^2} \|g\|_{L^2} \|h\|_{L^2}.
\end{align*}
We used Lemma \ref{CalcLem} in $\tau_2$ integration in the second inequality, and 
\begin{multline*}
    \lb \tau - \xi^2 \rb^{\frac12}\lb \tau - (\xi-\xi_2)^2- p(\xi_2) \rb^{\frac12} \\ \geq \lb \tau - \xi^2 \rb^{\frac12}\lb \tau - (\xi-\xi_2)^2- p(\xi_2) \rb^{\frac12}  \gtrsim \lb (\tau - \xi^2)-(\tau - \xi_1^2- p(\xi-\xi_1)) \rb^{\frac12} \\
    = \lb \xi_2 (\delta \xi_2^4 +\gamma \xi_{2}^2-\xi_2+2\xi)\rb^{\frac12}
\end{multline*}
in the third inequality. Therefore, it suffices to show that the quantity
\begin{align} \label{eq: uv reduced}
    \sup_{\xi} \int_{|\xi_2| \lesssim 1}\frac{ d\xi_2 }{\lb \xi_2 (\delta \xi_2^4 +\gamma \xi_{2}^2-\xi_2+2\xi)\rb^{\frac12}}
\end{align}
is finite. Since $\lb \xi_2 (\delta \xi_2^4 +\gamma \xi_{2}^2-\xi_2+2\xi)\rb \gtrsim |\xi_2|\lb \delta \xi_2^4 +\gamma \xi_{2}^2-\xi_2+2\xi \rb$ we have
\begin{align*}
    (\ref{eq: uv reduced}) \lesssim \sup_{\xi} \int_{|\xi_2| \lesssim 1}\frac{ d\xi_2 }{|\xi_2|^{\frac12}\lb \delta \xi_2^4 +\gamma \xi_{2}^2-\xi_2+2\xi\rb^{\frac12}}<\infty.
\end{align*}

\noindent \textit{Case 2: $|\xi_2| \gg 1$.}

In this case, the desired bound is immediate from (\ref{eq: Schrodinger L3}) and (\ref{eq: Kawahara L3}) since
\begin{align*}
    (\ref{eq: uv dual}) 
    &\lesssim \left \| \left [ \frac{|\hat{f}(\xi,\tau)|}{\lb \tau + \xi^{2} \rb^{\frac14}}\right ]^{\vee} \right \|_{L^{3}_{t,x}} \left \| \left [ \frac{\hat{g}(\xi,\tau)}{\lb \tau + \xi^{2} \rb^{\frac38}}\right ]^{\vee} \right \|_{L^{3}_{t,x}} \left \| \left [ \frac{\chi_{\{|\xi|\geq N\}}\hat{h}(\xi,\tau)}{\lb \tau -p(\xi) \rb^{\frac38}}\right ]^{\vee} \right \|_{L^{3}_{t,x}}\\
    &\lesssim \| f\|_{X^{0,0,1}} \|g\|_{X^{0,0,1}} \|h\|_{Y^{0,0,1}}.
\end{align*}
\end{proof}

\begin{proof} [Proof of (ii)]
By duality, it is enough to bound the quantity
\begin{align}  \label{eq: |u|^2u dual}
         \iint_{\begin{subarray}{l}\\ \xi=\xi_{1}+\xi_{2}+\xi_{3}\\ \tau=\tau_{1}+\tau_{2}+\tau_{2} \end{subarray}} \frac{|\hat{f}(\xi,\tau)|}{\lb \tau+\xi^2 \rb^{\frac14}}\frac{|\hat{g}(\xi_1,\tau_1)|}{\lb \tau_1+\xi_1^2 \rb^{\frac38}}\frac{|\hat{h}(-\xi_{2},-\tau_{2})|}{\lb \tau_2-\xi_2^2\rb^{\frac38}}\frac{|\hat{k}(\xi_{3},\tau_{3})|}{\lb \tau_3+\xi_3^2\rb^{\frac38}}
\end{align}
by $\| f \|_{X^{0,0,1}}\| g \|_{X^{0,0,1}}\| h \|_{X^{0,0,1}}\| k \|_{X^{0,0,1}}$. This follows directly from (\ref{eq: Schrodinger interpolation for cubic 1}), (\ref{eq: Schrodinger interpolation for cubic 2}), and the H\"older inequality.
\end{proof}

\begin{proof} [Proof of (iii)]
By duality, it is enough to bound the quantity
\begin{align}  \label{eq: vv_x dual}
         \iint_{\begin{subarray}{l}\\ \xi=\xi_{1}+\xi_{2}\\ \tau=\tau_{1}+\tau_{2} \end{subarray}} \frac{|\xi||\hat{f}(\xi,\tau)|}{\lb \tau- p(\xi) \rb^{\frac14}}\frac{  |\hat{g}(\xi_1,\tau_1)|}{\lb \tau_{1}-p(\xi_1) \rb^{\frac38}}\frac{ |\hat{h}(\xi_{2},\tau_{2})|}{\lb \tau_{2}- p(\xi_{2})\rb^{\frac38}}
\end{align}
by $\| f \|_{Y^{0,0,1}}\| g \|_{Y^{0,0,1}}\| h \|_{Y^{0,0,1}}$.
By the Cauchy-Schwarz inequality, it suffices to demonstrate that the quantity
\begin{align} \label{eq: vv_x reduced 1}
        \sup_{\xi_{1}} \int \frac{ \lb \xi \rb^{2} d\xi}{\lb 5\delta \xi\xi_1(\xi-\xi_1)(\xi^2-\xi\xi_1+\xi_1^2)+3\gamma\xi\xi_{1}(\xi-\xi_1)\rb^{\frac12}} 
\end{align}
is finite. Alternatively, it is enough to show that
\begin{align} \label{eq: vv_x reduced 2}
        \sup_{\xi} \int \frac{ \lb \xi \rb^{2}     d\xi_1}{\lb 5\delta \xi\xi_1(\xi-\xi_1)(\xi^2-\xi\xi_1+\xi_1^2)+3\gamma\xi\xi_{1}(\xi-\xi_1)\rb^{\frac12}} 
\end{align}
is finite. 

If $|\xi| \lesssim 1$ and $|\xi_1| \lesssim 1$, then there is nothing to prove.

\noindent \textit{Case 1: $|\xi| \lesssim 1$ and $|\xi_1| \gg 1$.}
\begin{align*}
    (\ref{eq: vv_x reduced 2}) \lesssim \sup_{\xi_1} \int_{|\xi| \lesssim 1} \frac{d\xi}{|\xi|^{\frac12}\lb \xi_1\rb^{2}} < \infty.
\end{align*}

\noindent \textit{Case 2: $|\xi_1| \lesssim 1$ and $|\xi| \gg 1$.}
\begin{align*}
    (\ref{eq: vv_x reduced 1}) \lesssim  \sup_{\xi} \int \frac{\lb \xi \rb^{2} d\xi_1}{|\xi_1|^{\frac12}\lb\xi\rb^{2}}  < \infty.
\end{align*}

\noindent \textit{Case 3: $|\xi-\xi_1| \lesssim 1$, $|\xi| \gg 1$ and $|\xi_1| \gg 1$.}
 \begin{align*}
 (\ref{eq: vv_x reduced 1}) \lesssim  \sup_{\xi} \int \frac{\lb \xi \rb^{2} d\xi_1}{|\xi-\xi_1|^{\frac12}\lb\xi\rb^{2}}  < \infty.
 \end{align*}
 
 In the rest of this proof, we always assume that $|\xi-\xi_1| \gg 1$, $|\xi_1| \gg 1$ and $|\xi-\xi_1| \gg 1$. Let $M:=\max(|\tau-p(\xi)|,|\tau_1-p(\xi_1)|,|\tau_2-p(\xi_2)|)$. Then $M\gtrsim|5\delta \xi\xi_1(\xi-\xi_1)(\xi^2-\xi\xi_1+\xi_1^2)+3\gamma\xi\xi_{1}(\xi-\xi_1)|$. Let
\begin{align*}
    P(\xi,\tau,\xi_1,\tau_1,\xi_2,\tau_2):=\frac{\lb \xi \rb|\hat{f}(\xi,\tau)|}{\lb \tau- p(\xi) \rb^{\frac14}}\frac{|\hat{g}(\xi_1,\tau_1)|}{\lb \tau_{1}-p(\xi_1) \rb^{\frac38}}\frac{|\hat{h}(\xi_{2},\tau_{2})|}{\lb \tau_{2}- p(\xi_2)\rb^{\frac38}}.
\end{align*}

\noindent \textit{Case 4: $|\xi| \gg |\xi_1|$.}

In this case, we have $M \gtrsim |\xi|^4|\xi_1|$ and $|\xi| \approx |\xi_2|$.

(i) $M=|\tau-p(\xi)|$. We have
\begin{align*}
         P(\xi,\tau,\xi_1,\tau_1,\xi_2,\tau_2) 
         \lesssim |\hat{f}(\xi,\tau)|\frac{|\hat{g}(\xi_1,\tau_1)|}{\lb \tau_{1}-p(\xi_1) \rb^{\frac38}}\frac{|\hat{h}(\xi_{2},\tau_{2})|}{\lb \tau_{2}- p(\xi_2)\rb^{\frac38}}.
\end{align*}
Using (\ref{eq: Kawahara L4}), we can apply the $L^2L^4L^4$-H\"older inequality.

(ii) $M=|\tau_1-p(\xi_1)|$. We have,
\begin{align*}
         P(\xi,\tau,\xi_1,\tau_1,\xi_2,\tau_2) 
         &\lesssim \frac{\lb \xi \rb^{\frac12}|\hat{f}(\xi,\tau)|}{\lb \tau-p(\xi) \rb^{\frac14}}\frac{|\hat{g}(\xi_1,\tau_1)|}{\lb \tau_{1}-p(\xi_1) \rb^{\frac14}}\frac{|\hat{h}(\xi_{2},\tau_{2})|}{\lb \tau_{2}- p(\xi_2)\rb^{\frac38}} \\
         &\approx \frac{\lb \xi \rb^{\frac14}|\hat{f}(\xi,\tau)|}{\lb \tau-p(\xi) \rb^{\frac14}}\frac{|\hat{g}(\xi_1,\tau_1)|}{\lb \tau_{1}-p(\xi_1) \rb^{\frac14}}\frac{\lb \xi_2 \rb^{\frac14}|\hat{h}(\xi_{2},\tau_{2})|}{\lb \tau_{2}- p(\xi_2)\rb^{\frac38}}.
\end{align*}
Using (\ref{eq: Kawahara L3}), we can apply the $L^3L^3L^3$-H\"older inequality.

(iii) $M=|\tau_2-p(\xi_2)|$. Similar to (ii).

\noindent \textit{Case 5: $|\xi_1| \gg |\xi|$.}

In this case, we have $M \gtrsim |\xi||\xi_1|^4$. Also note that $|\xi_1| \approx |\xi_2|$.

(i) $M=|\tau-p(\xi)|$. We have
\begin{align*}
     P(\xi,\tau,\xi_1,\tau_1,\xi_2,\tau_2) 
    &\lesssim \lb \xi \rb^{\frac34}|\hat{f}(\xi,\tau)|\frac{\lb \xi_1 \rb^{-1}|\hat{g}(\xi_1,\tau_1)|}{\lb \tau_{1}-p(\xi_1) \rb^{\frac38}}\frac{|\hat{h}(\xi_{2},\tau_{2})|}{\lb \tau_{2}- p(\xi_2)\rb^{\frac38}} \\
    &\lesssim |\hat{f}(\xi,\tau)|\frac{\lb \xi_1 \rb^{-\frac14}|\hat{g}(\xi_1,\tau_1)|}{\lb \tau_{1}-p(\xi_1) \rb^{\frac38}}\frac{|\hat{h}(\xi_{2},\tau_{2})|}{\lb \tau_{2}- p(\xi_2)\rb^{\frac38}} \\
    &\lesssim |\hat{f}(\xi,\tau)|\frac{|\hat{g}(\xi_1,\tau_1)|}{\lb \tau_{1}-p(\xi_1) \rb^{\frac38}}\frac{|\hat{h}(\xi_{2},\tau_{2})|}{\lb \tau_{2}- p(\xi_2)\rb^{\frac38}}.
\end{align*}
Using (\ref{eq: Kawahara L4}), we can apply the $L^2L^4L^4$-H\"older inequality.

(ii) $M=|\tau_1-p(\xi_1)|$. We have
\begin{align*}
     P(\xi,\tau,\xi_1,\tau_1,\xi_2,\tau_2) 
    &\lesssim \frac{\lb \xi \rb^{\frac78}|\hat{f}(\xi,\tau)|}{\lb \tau-p(\xi) \rb^{\frac14}}\frac{\lb \xi_1 \rb^{-\frac12}|\hat{g}(\xi_1,\tau_1)|}{\lb \tau_{1}-p(\xi_1) \rb^{\frac14}}\frac{ |\hat{h}(\xi_{2},\tau_{2})|}{\lb \tau_{2}- p(\xi_2)\rb^{\frac38}} \\
    &\lesssim \frac{\lb \xi \rb^{\frac14}|\hat{f}(\xi,\tau)|}{\lb \tau-p(\xi) \rb^{\frac14}}\frac{\lb \xi_1 \rb^{\frac18}|\hat{g}(\xi_1,\tau_1)|}{\lb \tau_{1}-p(\xi_1) \rb^{\frac14}}\frac{|\hat{h}(\xi_{2},\tau_{2})|}{\lb \tau_{2}- p(\xi_2)\rb^{\frac38}}.
\end{align*}
Using (\ref{eq: Kawahara L3}), we can apply the $L^3L^3L^3$-H\"older inequality.

(iii) $M=|\tau_2-p(\xi_2)|$. Similar to (ii).

\noindent \textit{Case 6: $|\xi_1| \approx |\xi|$.}

In this case, we have $M \gtrsim |\xi|^4$. 

(i) $M=|\tau-p(\xi)|$. We have
\begin{align*}
     P(\xi,\tau,\xi_1,\tau_1,\xi_2,\tau_2) 
     \lesssim |\hat{f}(\xi,\tau)|\frac{|\hat{g}(\xi_1,\tau_1)|}{\lb \tau_{1}-p(\xi_{1}) \rb^{\frac38}}\frac{ |\hat{h}(\xi_{2},\tau_{2})|}{\lb \tau_{2}- p(\xi_{2})\rb^{\frac38}} 
\end{align*}
Using (\ref{eq: Kawahara L4}), we can apply the $L^2L^4L^4$-H\"older inequality.

(ii) $M=|\tau_1-p(\xi_1)|$. We have
\begin{align*}
     P(\xi,\tau,\xi_1,\tau_1,\xi_2,\tau_2) 
     &\approx \frac{\lb \xi \rb^{\frac12}|\hat{f}(\xi,\tau)|}{\lb \tau- p(\xi) \rb^{\frac14}}\frac{  |\hat{g}(\xi_1,\tau_1)|}{\lb \tau_{1}-p(\xi_{1}) \rb^{\frac14}}\frac{ |\hat{h}(\xi_{2},\tau_{2})|}{\lb \tau_{2}- p(\xi_{2})\rb^{\frac38}} \\
     &\approx \frac{\lb \xi \rb^{\frac14}|\hat{f}(\xi,\tau)|}{\lb \tau- p(\xi) \rb^{\frac14}}\frac{ \lb \xi \rb^{\frac14} |\hat{g}(\xi_1,\tau_1)|}{\lb \tau_{1}-p(\xi_{1}) \rb^{\frac14}}\frac{ |\hat{h}(\xi_{2},\tau_{2})|}{\lb \tau_{2}- p(\xi_{2})\rb^{\frac38}}. 
\end{align*}
Using (\ref{eq: Kawahara L3}), we can apply the $L^3L^3L^3$-H\"older inequality.

(iii) $M=|\tau_2-p(\xi_2)|$. Similar to (ii).
\end{proof}

\begin{proof} [Proof of (iv)]
By duality, it is enough to bound the quantity
\begin{align}  \label{eq: |u|^2_x dual}
         \iint_{\begin{subarray}{l}\\ \xi=\xi_{1}+\xi_{2}\\ \tau=\tau_{1}+\tau_{2} \end{subarray}} \frac{|\xi||\hat{f}(\xi,\tau)|}{\lb \tau- p(\xi) \rb^{\frac14}}\frac{|\hat{g}(\xi_1,\tau_1)|}{\lb \tau_{1}+\xi_{1}^{2} \rb^{\frac38}}\frac{|\hat{h}(-\xi_{2},-\tau_{2})|}{\lb \tau_{2}-\xi_{2}^{2}\rb^{\frac38}}
\end{align}
by $\| f \|_{Y^{0,0,1}}\| g \|_{X^{0,0,1}}\| h \|_{X^{0,0,1}}$.

\noindent \textit{Case 1: $|\xi| \lesssim 1$.}

By the Cauchy-Schwartz inequality, it suffices to demonstrate that the quantity
\begin{align} \label{eq: |u|^2_x reduced}
        \sup_{\xi_{1}}\int_{|\xi| \lesssim 1}\frac{\lb \xi \rb^{2} d\xi}{\lb\xi(\delta \xi^{4}+\gamma \xi^{2}+\xi-2\xi_{1})\rb^{\frac12}} 
\end{align}
is finite. Since $|\xi| \lesssim 1$, we have $\lb\xi(\delta \xi^{4}+\gamma \xi^{2}+\xi-2\xi_{1})\rb \gtrsim \lb \xi \xi_1 \rb \gtrsim |\xi|\lb \xi_1 \rb$ . Thus,
\begin{align*}
    (\ref{eq: |u|^2_x reduced}) \lesssim \sup_{\xi_{1}} \frac{1}{\lb \xi_{1} \rb^{\frac12}}\int_{|\xi| \lesssim 1} \frac{1}{|\xi|^{\frac12}} d\xi < \infty.
\end{align*}

 In the rest of this proof, we always assume that $|\xi| \gg 1$. Let $M=\max(|\tau- p(\xi)|,|\tau_{1}+ \xi_{1}^{2}|,|\tau_{2}-\xi_{2}^{2}|)$. Then $M \gtrsim |\xi(\delta \xi^{4}+\gamma \xi^{2} +\xi -2\xi_1)|$. Let
\begin{align*}
    P(\xi,\tau,\xi_1,\tau_1,\xi_2,\tau_2):=\frac{\lb \xi \rb|\hat{f}(\xi,\tau)|}{\lb \tau- p(\xi) \rb^{\frac14}}\frac{|\hat{g}(\xi_1,\tau_1)|}{\lb \tau_{1}+\xi_{1}^{2} \rb^{\frac38}}\frac{|\hat{h}(-\xi_{2},-\tau_{2})|}{\lb \tau_{2}-\xi_{2}^{2}\rb^{\frac38}}.
\end{align*}

\noindent \textit{Case 2: $|\xi_1| \geq 2|\delta| |\xi|^{4}$.}

In this case we have $M \gtrsim |\xi||\xi_1|$. 

(i) $M=|\tau-p(\xi)|$. Since $M \gtrsim |\xi||\xi_1| \gtrsim |\xi|^5$, we have 
\begin{align*}
     P(\xi,\tau,\xi_1,\tau_1,\xi_2,\tau_2) \lesssim |\hat{f}(\xi,\tau)|\frac{|\hat{g}(\xi_1,\tau_1)|}{\lb \tau_{1}+\xi_{1}^{2} \rb^{\frac38}}\frac{|\hat{h}(-\xi_{2},-\tau_{2})|}{\lb \tau_{2}-\xi_{2}^{2}\rb^{\frac38}}.
\end{align*}
Using (\ref{eq: Schrodinger L4}), we can apply the $L^2L^4L^4$-H\"older inequality.

(ii) $M=|\tau_1+\xi_1^2|$. We have 
\begin{align*}
     P(\xi,\tau,\xi_1,\tau_1,\xi_2,\tau_2) 
     &\lesssim \frac{\lb \xi \rb^{\frac34}|\hat{f}(\xi,\tau)|}{\lb \tau- p(\xi) \rb^{\frac14}}\frac{\lb \xi_1 \rb^{-\frac14}|\hat{g}(\xi_1,\tau_1)|}{\lb \tau_{1}+\xi_{1}^{2} \rb^{\frac18}}\frac{|\hat{h}(-\xi_{2},-\tau_{2})|}{\lb \tau_{2}-\xi_{2}^{2}\rb^{\frac38}} \\
     &\lesssim \frac{\lb \xi_1 \rb^{\frac{3}{16}}|\hat{f}(\xi,\tau)|}{\lb \tau- p(\xi) \rb^{\frac14}}\frac{\lb \xi_1 \rb^{-\frac14}|\hat{g}(\xi_1,\tau_1)|}{\lb \tau_{1}+\xi_{1}^{2} \rb^{\frac18}}\frac{|\hat{h}(-\xi_{2},-\tau_{2})|}{\lb \tau_{2}-\xi_{2}^{2}\rb^{\frac38}} \\
     &\lesssim \frac{|\hat{f}(\xi,\tau)|}{\lb \tau- p(\xi) \rb^{\frac14}}\frac{|\hat{g}(\xi_1,\tau_1)|}{\lb \xi_1 \rb^{\frac{1}{16}}\lb \tau_{1}+\xi_{1}^{2} \rb^{\frac18}}\frac{|\hat{h}(-\xi_{2},-\tau_{2})|}{\lb \tau_{2}-\xi_{2}^{2}\rb^{\frac38}}.
\end{align*}
Then we can apply Lemma \ref{lem: key estimate 2}.

(iii) $M=|\tau_2-\xi_2^2|$. Similar to (ii).

\noindent \textit{Case 3: $|\xi_1| \leq 2|\delta| |\xi|^{4}$.}

In this case, we have $\lb \xi \rb \lesssim | 5\delta \xi^{4}+3\gamma \xi^{2}+2 \xi_1 |^{\frac{1}{4}}$. Therefore we can apply Lemma \ref{lem: key estimate}.

\end{proof}

\subsection{Global well-posedness}

Before we prove Theorem \ref{thm: GWP}, we need the following lemma which is a consequence of local well-posedness results \cite{CLMW, Kato} and the standard $L^2$-conservation law for the Kawahara equation.

\begin{lem} \label{lem: Kawahara conservation}
For any $z_0 \in L^{2}(\R)$, the equation 
\begin{equation} \label{eq: Kawahara}
\begin{cases}
\partial_{t} z + \gamma \partial^{3}_{x}z-\delta \partial^{5}_{x}z +z\partial_{x}z= 0, \\
z(x,0) = z_0(x).
\end{cases}
\end{equation}
is globally well-posed. Moreover, for any $t \in \R$, we have $\| z(t) \|_{L^{2}} = \|z_0 \|_{L^{2}}$.
\end{lem}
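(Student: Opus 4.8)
The plan is to combine local well-posedness for \eqref{eq: Kawahara} with the $L^2$-conservation law: since the conserved $L^2$-norm is exactly the quantity controlling the local existence time, the local solutions can be iterated to produce a global one. The only delicate point is that the conservation law must be justified for merely $L^2$ data, which I would handle by approximation from smooth solutions.

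First I would establish local well-posedness of \eqref{eq: Kawahara} in $L^2(\R)$ in the space $Y^{0,\frac12,1}$. Writing $z\partial_x z = \frac12 \partial_x(z^2)$ and combining Lemma \ref{lem: group, duhamel} (ii), (iv) (with $b=\frac12$, $c=\frac14$) with the bilinear estimate (iii) of Lemma \ref{lem: multilinear}, the Duhamel map
\begin{align*}
\Phi(z) := \psi_T W(t) z_0 - \tfrac12 \psi_T \int_0^t W(t-t')\,\partial_x(z^2)\,dt'
\end{align*}
satisfies $\|\Phi(z)\|_{Y^{0,\frac12,1}} \lesssim \|z_0\|_{L^2} + T^{\frac14}\|z\|_{Y^{0,\frac12,1}}^2$ together with a matching difference estimate, so it is a contraction on a ball of $Y^{0,\frac12,1}$ once $T$ is small. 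The essential features to record are that the existence time may be taken of the form $T = T(\|z_0\|_{L^2}) > 0$, depending only on the $L^2$-norm of the datum (this is the content of \cite{CLMW, Kato}), and that $Y^{0,\frac12,1} \hookrightarrow C([-T,T];L^2)$, so the solution map $z_0 \mapsto z$ is continuous from $L^2$ into $C([-T,T];L^2)$. Running the same contraction in $Y^{s,\frac12,1}$ for $s \geq 0$ gives persistence of regularity on the common interval.

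Next I would prove the conservation law for smooth solutions. For $z_0 \in H^\infty(\R)$ the solution $z$ is smooth and decaying in $x$, so pairing the equation with $z$ and integrating by parts gives
\begin{align*}
\tfrac12 \frac{d}{dt}\|z(t)\|_{L^2}^2 = -\gamma \int_{\R} z\,\partial_x^3 z\,dx + \delta \int_{\R} z\,\partial_x^5 z\,dx - \int_{\R} z^2 \partial_x z\,dx.
\end{align*}
The operators $\partial_x^3$ and $\partial_x^5$ are skew-adjoint on $L^2(\R)$, so the first two integrals vanish, while $z^2 \partial_x z = \frac13 \partial_x(z^3)$ forces the last to vanish as well. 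Hence $\|z(t)\|_{L^2} = \|z_0\|_{L^2}$ whenever the datum is smooth.

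To reach $L^2$ data I would approximate $z_0 \in L^2$ by a sequence $z_{0,n} \in H^\infty$ with $z_{0,n} \to z_0$ in $L^2$; the corresponding solutions satisfy $\|z_n(t)\|_{L^2} = \|z_{0,n}\|_{L^2}$, and the continuity of the solution map gives $z_n \to z$ in $C([-T,T];L^2)$, so passing to the limit yields $\|z(t)\|_{L^2} = \|z_0\|_{L^2}$ on the local interval. Since $T$ depends only on $\|z_0\|_{L^2} = \|z(t)\|_{L^2}$, iterating the local construction over consecutive intervals of fixed length $T$ extends $z$ to all $t \in \R$ with the $L^2$-norm conserved throughout. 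The main obstacle is precisely this last upgrade: the formal integration by parts is legitimate only for solutions with enough regularity and spatial decay, so the argument hinges on having both persistence of regularity for smooth data and continuity of the $L^2$ solution map, both supplied by the local theory.
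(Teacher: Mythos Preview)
Your proposal is correct and follows exactly the standard route the paper alludes to: the paper does not give a detailed proof of this lemma, stating only that it ``is a consequence of local well-posedness results \cite{CLMW, Kato} and the standard $L^2$-conservation law for the Kawahara equation.'' Your argument---local well-posedness in $Y^{0,\frac12,1}$ via Lemmas \ref{lem: group, duhamel} and \ref{lem: multilinear}(iii), $L^2$-conservation for smooth solutions by integration by parts, passage to $L^2$ data via continuous dependence, and iteration using that $T$ depends only on $\|z_0\|_{L^2}$---is precisely the intended justification, carried out with the machinery already developed in the paper.
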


\begin{proof}[Proof of Theorem \ref{thm: GWP}]
Suppose that $(u,v)$ solves (\ref{eq: SK}) and $z$ solves (\ref{eq: Kawahara}) with $z_0=v_0$. Letting $w=v-z$, we see that $(u,v,w,z)$ solves the system
\begin{equation} \label{eq: difference equation}
\begin{cases}
i\partial_{t}u + \partial^{2}_{x}u = \alpha uv +\beta |u|^2u, \\
\partial_{t}v + \gamma \partial^{3}_{x}v -\delta \partial^{5}_{x}v + v\partial_{x}v = \varepsilon\partial_{x}|u|^2, \\
\partial_{t}w+ \gamma \partial^{3}_{x}w-\delta \partial^{5}_{x}w+\frac12 \partial_{x}((v+z)w)=\varepsilon\partial_{x}|u|^2, \\
\partial_{t}z + \gamma \partial^{3}_{x}z-\delta \partial^{5}_{x}z +z\partial_{x}z= 0, \\
u(x,0) = u_0(x), \quad v(x,0) =z(x,0)= v_0(x), \quad w(x,0)=0.
\end{cases}
\end{equation}
For $0<T\leq 1$ let
\begin{align} \label{eq: operators}
    \begin{cases}
       \Gamma_1(u,v,w,z)=\psi_T S(t)u_0-i\psi_T\int_{0}^{t}S(t-t')(\alpha uv +\beta |u|^2u) dt', \\
       \Gamma_2(u,v,w,z)=\psi_T W(t)v_0+\psi_T\int_{0}^{t}W(t-t')(-v\partial_{x}v+\varepsilon\partial_{x}|u|^2) dt', \\
       \Gamma_3(u,v,w,z)=\psi_T \int_{0}^{t}W(t-t')\left(-\frac12 \partial_{x}((v+z)w)+\varepsilon\partial_{x}|u|^2\right) dt', \\
       \Gamma_4(u,v,w,z)=\psi_T W(t)v_0+\psi_T\int_{0}^{t}W(t-t')(-z\partial_{x}z) dt'.
    \end{cases}
\end{align}
Then by Lemmas \ref{lem: group, duhamel} and \ref{lem: multilinear} we have
\begin{align*}
    &\|\Gamma_1(u,v,w,z)\|_{X^{0,\frac38,1}} 
     \leq C T^{\frac18}\|u_0\|_{L^2}+CT^{\frac38}\|u\|_{X^{0,\frac38,1}}\|v\|_{Y^{0,\frac38,1}}+CT^{\frac38}\|u\|_{X^{0,\frac38,1}}^3 \\
    &\|\Gamma_2(u,v,w,z)\|_{Y^{0,\frac38,1}} 
     \leq CT^{\frac18}\|v_0\|_{L^2}+CT^{\frac38}\|u\|_{X^{0,\frac38,1}}^2+CT^{\frac38}\|v\|_{Y^{0,\frac38,1}}^2, \\
    &\|\Gamma_3(u,v,w,z)\|_{Y^{0,\frac38,1}} 
     \leq \begin{multlined}[t] CT^{\frac38}\|v\|_{Y^{0,\frac38,1}}\|w\|_{Y^{0,\frac38,1}} \\
     +CT^{\frac38}\|z\|_{Y^{0,\frac38,1}}\|w\|_{Y^{0,\frac38,1}}+CT^{\frac38}\|u\|_{X^{0,\frac38,1}}^2,
     \end{multlined} \\
    & \|\Gamma_4(u,v,w,z)\|_{Y^{0,\frac38,1}} 
     \leq C T^{\frac18}\|v_0\|_{L^2}+CT^{\frac38}\|z\|_{Y^{0,\frac38,1}}^2.
\end{align*}
Let
\begin{multline*}
    B:=\{ u \in X^{0,\frac38,1} : \|u\|_{X^{0,\frac38,1}} \leq 2CT^{\frac18}\|u_0\|_{L^2}\} \times \{ v \in Y^{0,\frac38,1} : \|v\|_{Y^{0,\frac38,1}} \leq 2CT^{\frac18}\|v_0\|_{L^2}\} \\
    \times \{ w \in Y^{0,\frac38,1} : \|w\|_{Y^{0,\frac38,1}} \leq 8C^3T^{\frac58}\|u_0\|_{L^2}^2\} \times \{ z \in Y^{0,\frac38,1} : \|z\|_{Y^{0,\frac38,1}} \leq 2CT^{\frac18}\|v_0\|_{L^2}\}.
\end{multline*}
Since $\|u\|_{L^2}$ conserved, we may assume that $\|u_0\|_{L^2} \leq \|v_0\|_{L^2}$ and $C^3 T^{\frac58} \|u_0\|_{L^2}^3 \leq 1$. Take $0< T \leq 1$ such that $8C^{2}T^{\frac12}\|v_0\|_{L^2} \sim 1$. Then by a contraction argument, there exists a fixed point $(u,v,w,z)$ of (\ref{eq: operators}) in $B$. Using Lemma \ref{lem: group, duhamel} and Lemma \ref{lem: multilinear}, we have
\begin{align*}
        \|w\|_{C_t L^2_{x}([0,T] \times \R)} 
        &\lesssim \left \| \psi_T \int_{0}^{t}W(t-t')\left(-\frac12 \partial_{x}((v+z)w)+\varepsilon\partial_{x}|u|^2\right) dt'\right \|_{Y^{0,\frac12,1}} \\
        &\lesssim T^{\frac14} \left \| -\frac12 \partial_{x}((v+z)w)+\varepsilon\partial_{x}|u|^2 \right \|_{Y^{0,-\frac14,\infty}} \\
        &\lesssim T^{\frac14}\|v\|_{Y^{0,\frac38,1}}\|w\|_{Y^{0,\frac38,1}}+T^{\frac14}\|z\|_{Y^{0,\frac38,1}}\|w\|_{Y^{0,\frac38,1}}+T^{\frac14}\|u\|_{X^{0,\frac38,1}}^{2} \\
        & \lesssim T^{\frac12}\|u_0\|_{L^2}^2.
\end{align*}
Invoking $w=v-z$ and then using Lemma \ref{lem: Kawahara conservation}, we have
\begin{align*}
    \|v(T)\|_{L^2}\leq\|z(T)\|_{L^2}+\|w(T)\|_{L^2}\leq \|v_0\|_{L^2}+CT^{\frac12}\|u_0\|_{L^2}^2.
\end{align*}
Next, we update the initial data of the system (\ref{eq: difference equation}) to $u_0=u(T)$, $v_0=v(T)$, $z_0=v(T)$ and $w_0=0$, and then repeat the above estimate. We can iterate this process $m$ times on time intervals before the quantity $\|v\|_{L^2}$ doubles, where
\begin{align*}
    m \sim \frac{\|v_0\|_{L^2}}{T^{\frac12}\|u_0\|_{L^2}^2}.
\end{align*}
Since $mT \sim \|u_0\|_{L^2}^{-2}$, the solution can be globally extended.
\end{proof}

\section*{Acknowledgement}
The author would like to thank his advisor, Professor Nikolaos Tzirakis, for many helpful suggestions and comments.


\begin{thebibliography}{5}

\bibitem{ACF}
P. ~\'{A}lvarez-Caudevilla, E. ~ Colorado, R.~Fabelo,
\emph{A higher order system of some coupled nonlinear Schr\"odinger and Korteweg-de Vries equations},
 J. Math. Phys. \textbf{58} (2017), no. 11, 111503, 13 pp. 

\bibitem{BHHT}
I. ~Bejenaru, S. ~Herr, J. ~Holmer, D. ~Tataru, 
\emph{On the 2D Zakharov system with $L^2$-Schrödinger data},
 Nonlinearity \textbf{22} (2009), no. 5, 1063–1089. 

\bibitem{BOP}
D. ~Bekiranov, T. ~Ogawa, G. ~Ponce, 
\emph{Weak solvability and well-posedness of a coupled Schr\"odinger-Korteweg de Vries equation for capillary-gravity wave interactions},
Proc. Amer. Math. Soc. \textbf{125} (1997), no. 10, 2907–2919. 

\bibitem{BOP1}
D. ~Bekiranov, T. ~Ogawa, G. ~Ponce, 
\emph{Interaction equations for short and long dispersive waves},
J. Funct. Anal. \textbf{158} (1998), no. 2, 357–388. 

\bibitem{Bourgain}
J. ~Bourgain,
\emph{Fourier transform restriction phenomena for certain lattice subsets and applications to nonlinear evolution equations. I. Schrödinger equations},
Geom. Funct. Anal. \textbf{3} (1993), no. 2, 107–156. 

\bibitem{CLMW}
W. ~Chen, J. ~Li, C. ~Miao, J. ~Wu, 
\emph{Low regularity solutions of two fifth-order KdV type equations},
J. Anal. Math. \textbf{107} (2009), 221–238.

\bibitem{CHT}
J.~Colliander, J. ~Holmer, N. ~Tzirakis,
\emph{Low regularity global well-posedness for the Zakharov and Klein-Gordon-Schrödinger systems},
Trans. Amer. Math. Soc. \textbf{360} (2008), no. 9, 4619–4638. 

\bibitem{CKS}
J. ~Colliander, C. ~Kenig, G. ~Staffilani, 
\emph{Local well-posedness for dispersion-generalized Benjamin-Ono equations},
Differential Integral Equations \textbf{16} (2003), no. 12, 1441–1472. 

\bibitem{CL}
A.~J.~Corcho, F.~Linares,
\emph{Well-posedness for the Schro\"odinger--Korteweg--deVries system}, 
Trans. Amer. Math. Soc. \textbf{359} (2007), 4089--4106.

\bibitem{ET}
B.~Erdo\u{g}an, N.~Tzirakis,
\emph{Smoothing and global attractors for the Zakharov system on the torus}, 
Anal. PDE \textbf{6} (2013), no. 3, 723–750. 

\bibitem{GM}
L. ~Grafakos, M. ~Masti\l{}o,
\emph{Analytic families of multilinear operators},
Nonlinear Anal. \textbf{107} (2014), 47–62. 

\bibitem{GP}
A. ~Gr\"unrock, H. ~Pecher,
\emph{Global solutions for the Dirac-Klein-Gordon system in two space dimensions},
 Comm. Partial Differential Equations \textbf{35} (2010), no. 1, 89–112.

\bibitem{GW}
Z.~Guo, Y.~Wang,
\emph{On the well-posedness of the Schr\"odinger-Korteweg-de Vries system}, 
 J. Differential Equations \textbf{249} (2010), no. 10, 2500–2520. 
 
\bibitem{Huo}
Z. ~Huo, 
\emph{The Cauchy problem for the fifth order shallow water equation},
Acta Math. Appl. Sin. Engl. Ser. \textbf{21} (2005), no. 3, 441–454. 

\bibitem{Kato}
T.~Kato,
\emph{Local well-posedness for Kawahara equation},
 Adv. Differential Equations \textbf{16} (2011), no. 3-4, 257–287. 

\bibitem{KPV}
C.~Kenig, G. ~Ponce, L. ~Vega,
\emph{On the ill-posedness of some canonical dispersive equations},
Duke Math. J. \textbf{106} (2001), no. 3, 617–633. 

\bibitem{Makhankov}
V. ~Makhankov,
\emph{Dynamics of classical solitons (in nonintegrable systems)},
Phys. Rep. \textbf{35} (1978), no. 1, 1–128. 

\bibitem{Pecher}
H. ~Pecher, 
\emph{The Cauchy problem for a Schrödinger-Korteweg-de Vries system with rough data},
Differential Integral Equations \textbf{18} (2005), no. 10, 1147–1174. 

\bibitem{Takaoka}
H. ~Takaoka,
\emph{Well-posedness for the one-dimensional nonlinear Schr\"odinger equation with the derivative nonlinearity}, 
Adv. Differential Equations \textbf{4} (1999), no. 4, 561–580.

\bibitem{Wu}
Y.~Wu,
\emph{The Cauchy problem of the Schr\"odinger--Korteweg--de Vries system},
Differential Integral Equations \textbf{23} (2010), 569--600.

\end{thebibliography}
\end{document}